\documentclass[12pt]{article}
 
\usepackage[margin=1in]{geometry} 
\usepackage{amsmath,amsthm,amssymb,enumerate}

\usepackage{xcolor}
\usepackage{hyperref}
\usepackage{graphicx}
\hypersetup{
    colorlinks,
    linkcolor={blue!90!black},
    citecolor={blue!100!black},
    urlcolor={blue!80!black}
}

\DeclareRobustCommand{\bl}[1]{{\color{blue}#1}}
\DeclareRobustCommand{\gr}[1]{{\color{green}#1}}
\DeclareRobustCommand{\re}[1]{{\color{red}#1}}

\usepackage{comment}
\usepackage{subcaption}
\usepackage{enumitem}

\usepackage{authblk}
\title{Asymptotic Theory for Graphical SLOPE: Precision Estimation and Pattern Convergence\footnote{The opinions expressed in this article are those of the authors and do not necessarily reflect the views of La Francaise Systematic Asset Management GmbH or any of its affiliates.}}
\author[1]{Ivan Hejný}
\author[2]{Giovanni Bonaccolto}
\author[3, 4]{Philipp Kremer}
\author[4]{Sandra Paterlini}
\author[5]{Małgorzata Bogdan}
\author[1]{Jonas Wallin}
\affil[1]{Department of Statistics, Lund University, Sweden}
\affil[2]{Department of Economics and Law, ``Kore'' University of Enna, Italy}
\affil[3]{La Francaise Systematic Asset Management GmbH, 
Frankfurt am Main, Germany}
\affil[4]{Department of Economics and Management, 
University of Trento, Italy}
\affil[5]{Institute of Mathematics, University of Wroclaw, Poland}
\date{}

\usepackage{titlesec}
\titleformat*{\section}{\large\bfseries}
\titleformat*{\subsection}{\small\bfseries}
\titleformat*{\subsubsection}{\small\bfseries}

\theoremstyle{plain}
\newtheorem{theorem}{Theorem}[section]
\newtheorem{proposition}[theorem]{Proposition}
\newtheorem{lemma}[theorem]{Lemma}

\theoremstyle{definition}

\newtheorem{example}[theorem]{Example}
\newtheorem{remark}[theorem]{Remark}

\newtheorem*{notation}{Notation}

\renewcommand{\vec}{\operatorname{vec}}   % override arrow version
\DeclareMathOperator{\vech}{vech}

\DeclareMathOperator{\Cov}{Cov}

\newcommand{\indep}{\perp\!\!\!\perp}

\begin{document}

\maketitle

\begin{abstract}
This paper studies Graphical SLOPE for precision matrix estimation, with emphasis on its ability to recover both sparsity and clusters of edges with equal or similar strength. In a fixed-dimensional regime, we establish that the root-$n$ scaled estimation error converges to the unique minimizer of a strictly convex optimization problem defined through the directional derivative of the SLOPE penalty. We also establish convergence of the induced SLOPE pattern, thereby obtaining an asymptotic characterization of the clustering structure selected by the estimator. A comparison with GLASSO shows that the grouping property of SLOPE can substantially improve estimation accuracy when the precision matrix exhibits structured edge patterns.

To assess the effect of departures from Gaussianity, we then analyze Gaussian-loss precision matrix estimation under elliptical distributions. In this setting, we derive the limiting distribution and quantify the inflation in variability induced by heavy tails relative to the Gaussian benchmark. We also study TSLOPE, based on the multivariate $t$-loss, and derive its limiting distribution. The results show that TSLOPE offers clear advantages over GSLOPE under heavy-tailed data-generating mechanisms. Simulation evidence suggests that these qualitative conclusions persist in high-dimensional settings, and an empirical application shows that SLOPE-based estimators, especially TSLOPE, can uncover economically meaningful clustered dependence structures.
\end{abstract}
%\begin{abstract}
%We develop an asymptotic theory for Graphical SLOPE in precision matrix estimation, with particular emphasis on its ability to recover not only sparsity but also clusters of edges with equal or similar strength. In a fixed-dimensional regime, we show that the root-$n$ scaled estimation error converges to the unique minimizer of a strictly convex problem involving the directional derivative of the SLOPE penalty. We also establish convergence of the induced SLOPE pattern, yielding an asymptotic characterization of the clustering structure recovered by the estimator. A comparison of the asymptotic distributions of GSLOPE and GLASSO shows that the clustering property of SLOPE can substantially improve estimation accuracy for structured precision matrices.

%We then specialize the general theory to Gaussian-loss precision matrix estimation under elliptical distributions, deriving the corresponding limiting distribution and quantifying the increase in variability caused by heavy tails relative to the Gaussian benchmark. We also analyze TSLOPE, based on the multivariate $t$ loss, and derive its limiting distribution. Our results highlight the advantage of TSLOPE over GSLOPE when the data-generating distribution is heavy-tailed. Additional simulations indicate that these qualitative findings extend to the high-dimensional setting. An empirical application further demonstrates that SLOPE-based estimators, especially TSLOPE, can uncover economically meaningful clustered dependence structures.
%\end{abstract}

\section{Introduction}
Graphical models provide a parsimonious representation of conditional
dependence among the components of a random vector $X=(X_1,\dots,X_p)^\top$.
For Gaussian graphical models (GGMs) $X\sim\mathcal N_p(0,\Sigma)$, conditional independencies are encoded by
zeros in the precision matrix $\Theta_0 := \Sigma^{-1}$:
%\textcolor{blue}{Gaussian graphical models (GGMs) provide a parsimonious representation of conditional
%dependence among the components of a random vector $X=(X_1,\dots,X_p)^\top$.
%In the Gaussian case $X\sim\mathcal N_p(0,\Sigma)$, conditional independencies are encoded by
%zeros in the precision matrix $\Theta_0 := \Sigma^{-1}$:}
$(\Theta_0)_{ij}=0$ if and only if $X_i \indep X_j \mid X_{-(i,j)}$
\cite{lauritzen1996graphical}. Estimating $\Theta_0$ from $n$ i.i.d.\ observations is therefore a central
task in network learning and multivariate analysis, with applications in genomics,
neuroscience, and financial econometrics.

A widely used approach estimates $\Theta_0$ by solving a convex penalized likelihood problem
over the cone $\mathrm{Sym}_+(p)$.
In particular, the graphical Lasso (GLasso) augments the Gaussian negative log-likelihood with
an entrywise $\ell_1$ penalty on off-diagonal elements, yielding a sparse precision estimate and
a sparse conditional independence graph \cite{friedman2008sparse}.
A large literature establishes statistical guarantees for GLasso and related estimators in
high-dimensional regimes under sparsity assumptions; see, e.g.,
\cite{RavikumarWainwrightRaskuttiYu2011} and references therein.

In many applications, however, the goal is not only to identify \emph{which} edges are present, but also to recover \emph{groups of edges with comparable or identical strength}. Such grouping arises in block-structured dependence, latent factor models, and hub-like networks, and is closely related to the equality-constrained structures studied in colored graphical models (see, e.g., \cite{HojsgaardLauritzen2008}), where edges sharing the same ``color'' are constrained to have identical parameters.

In these settings, an estimator may achieve a small global error yet still fragment truly homogeneous groups into slightly different magnitudes, thereby complicating interpretation. This motivates distinguishing overall estimation accuracy from the ability to recover the underlying \emph{clustering structure} implied by $\Theta_0$.

Graphical SLOPE (GSLOPE) is designed to target this second objective. It replaces the entrywise
$\ell_1$ penalty by a sorted $\ell_1$ (SLOPE) penalty on the off-diagonal precision entries,
thereby encouraging sparsity while allowing multiple edges to be tied to the same magnitude.
To formalize and assess this behavior, we use the notion of \emph{patterns} for polyhedral
regularizers \cite{bogdan2022pattern,graczyk2025graphical,hejny2025unveiling, hejny2025asymptotic},
which captures both sparsity and equality relations among nonzero coefficients.
This viewpoint suggests a natural empirical diagnostic: in addition to the usual RMSE, we
track a clustering RMSE that measures deviation from the clustering structure implied by the
target precision matrix. The resulting RMSE versus clustering-RMSE curves quantify the
trade-off induced by tuning: stronger regularization can improve clustering at the cost of
increased global error, and vice versa.

The paper makes three contributions.
First, in a fixed-$p$ asymptotic regime ($n\to\infty$ with $p$ fixed), we characterize the
limiting distribution of $\sqrt{n}\,(\widehat{\Theta}_n-\Theta_0)$ for GSLOPE under a general
smooth loss. The limit is given as the unique minimizer of a strictly convex quadratic-plus-penalty
problem involving the population Hessian and the directional derivative of the SLOPE penalty,
and we also establish convergence of the induced SLOPE pattern.

Second, we specialize the limit to Gaussian-loss precision estimation under elliptical models
and derive explicit expressions for the Hessian and the score covariance, thereby quantifying
variance inflation under heavy tails. We further analyze TSLOPE, based on the multivariate $t$
loss, and obtain the corresponding limiting quantities under a condition on the radial
distribution.

Third, we complement the theory with simulations that (i) validate the asymptotic RMSE
approximation, (ii) decompose error into a component compatible with the target pattern and a
clustering-error component, and (iii) compare GSLOPE and TSLOPE under $t$-distributed data
across degrees of freedom.

\subsection{Organization of the paper}
The paper is organized as follows. Section~\ref{sec:related} reviews the related literature. Section~\ref{sec:theoretical results} presents the main theoretical results, describing a general asymptotic framework for graphical SLOPE (Theorem~\ref{main conditions theorem}). Within this section, we apply the framework to establish the exact asymptotic error distribution for GSLOPE under elliptically distributed data (Theorem~\ref{theorem convergence in distribution} and Proposition~\ref{proposition for elliptical distributions}) and derive a similar asymptotic formula for TSLOPE using a multivariate $t$-distribution penalty (Theorem~\ref{theorem convergence in distribution for t distribution}). Section~\ref{sec:simulations} includes simulation studies that illustrate the asymptotic framework, highlighting TSLOPE's advantage in estimation and pattern recovery under heavy-tailed, block-structured data relative to GSLOPE, GLASSO, and TLASSO. Section~\ref{sec:empirical} applies these estimators to empirical financial data to identify economically meaningful groupings among stock portfolios. Section~\ref{sec:conclusion} concludes the paper. Auxiliary results, proofs, and figures are provided in Appendix~\ref{sec:appendix}.

\section{Related literature}\label{sec:related}
Sparse precision matrix estimation and Gaussian graphical modeling have been studied extensively over the last two decades. In Gaussian graphical models, conditional
independencies correspond to zeros in the precision matrix, which motivates convex
estimators that combine the Gaussian log-likelihood with a sparsity-inducing penalty; the
graphical Lasso is the canonical example \cite{friedman2008sparse}. A large body of
work establishes statistical guarantees for such estimators in high-dimensional regimes under
sparsity and regularity conditions, including rates and support recovery results for
$\ell_1$-penalized log-determinant programs \cite{RavikumarWainwrightRaskuttiYu2011}. Closely
related approaches include neighborhood selection based on nodewise Lasso regressions
\cite{MeinshausenBuhlmann2006} and $\ell_1$-constrained precision estimation such as CLIME
\cite{CaiLiuLuo2011}. These contributions collectively provide the methodological and
theoretical baseline against which more structured regularizers for graphical models can be
compared.

Beyond entrywise sparsity, a growing literature develops convex regularizers that encode
additional structure---such as grouping, fusion, hierarchical constraints, and hub patterns---in
graphical model estimation. From a geometric perspective, many of these regularizers are
\emph{polyhedral} norms or can be expressed via \emph{atomic} norms, enabling unified analyses
of estimation and model selection in both low- and high-dimensional settings. Recent work
connects graphical Lasso-type estimators to a broader family of atomic-norm formulations and
studies their high-dimensional \emph{pattern recovery} properties, emphasizing the role of
non-differentiability and the induced low-dimensional structure of the solution
\cite{graczyk2025graphical}. This line of research motivates penalties that recover
structured dependence patterns, not merely sparse edge sets.

In regression, the sorted $\ell_1$ penalty (SLOPE) was introduced as a convex procedure that
adapts to unknown sparsity and, with suitable tuning, can control the false discovery rate
\cite{BogdanVanDenBergSabattiSuCandes2015,BenjaminiHochberg1995}. Independently of its multiple
testing interpretation, SLOPE (and the related class of ordered weighted $\ell_1$ norms)
exhibits an important geometric property: by penalizing ordered absolute values with a
decreasing weight sequence, it naturally induces \emph{clustering} of coefficients, producing
groups of equal magnitudes. Recent work formalizes this phenomenon through the concept of
\emph{patterns} and provides guarantees for \emph{pattern recovery} under SLOPE, highlighting
how polyhedral geometry governs the equality and ordering relations recovered by the
estimator \cite{bogdan2022pattern}. These developments provide the conceptual foundation for transporting
SLOPE to matrix-valued problems such as precision matrix estimation, where clustering
corresponds to equality of edge strengths, closely related to the equality-constrained structures
studied in colored graphical models \cite{HojsgaardLauritzen2008}.

The present paper also relates to the broader literature on low-dimensional asymptotics for
estimators defined by non-smooth convex regularization. Classical results for Lasso-type
estimators in the fixed-$p$ regime characterize the limit distribution of the rescaled error
via an auxiliary convex optimization problem obtained from a local quadratic approximation of
the loss and the directional derivative of the penalty \cite{fu2000asymptotics,VanDerVaart1998}.
Recent work generalizes this program to a wide class of loss functions and polyhedral
regularizers, and further develops asymptotic results for \emph{pattern-valued} functionals,
establishing convergence of the induced patterns and providing tractable descriptions of
their limiting behavior \cite{hejny2025unveiling, hejny2025asymptotic}. Our analysis of
graphical SLOPE can be viewed as a concrete specialization of this framework to precision
matrix estimation, with a focus on how clustering patterns arise in the estimation error.

Finally, our heavy-tail analysis connects to robust graphical modeling under distributional
misspecification. In many empirical applications, Gaussianity is an imperfect approximation
and tail heaviness can inflate variability when Gaussian likelihood losses are employed.
Elliptical models provide a tractable generalization that preserves a form of radial symmetry
while allowing for heavy tails, and the multivariate $t$ distribution is a standard benchmark
within this class. Likelihood-based procedures tailored to $t$ models can therefore serve as
robust alternatives to Gaussian-loss estimators in heavy-tailed settings. Our GSLOPE/TSLOPE
comparison fits into this agenda by quantifying, through limiting distributions, how tail
heaviness affects the score covariance and how loss specification interacts with SLOPE-type
regularization.

%%%%%%%%%%%%%%%%%%%%%%%%%%%%%%%%%%%%%%%%%%%%%%%%%%%%%%%%%%%%%%
\section{Theoretical results}\label{sec:theoretical results}
\subsection{Precision estimation by SLOPE}
Let $X^{(1)}, \dots, X^{(n)}$ be $n$ i.i.d.\ observations of a random vector $X = (X_1, \dots, X_p)^T \in \mathbb{R}^p$. We assume throughout that $\mathbb{E}[X] = 0$ and that the covariance matrix $\operatorname{Cov}(X)$ has full rank. We consider regularized M-estimators targeting an underlying structural parameter $\Theta_0$, defined as solutions to empirical optimization problems of the form
\begin{align}\label{main objective} \widehat{\Theta}_n \in \underset{\Theta \in \mathrm{Sym}+(p)}{\operatorname{argmin}}\left\{ \frac{1}{n} \sum_{i=1}^n \ell\big(X^{(i)},\Theta\big) + n^{-1/2} \operatorname{Pen}(\Theta) \right\},
\end{align}
where $\ell(X,\Theta)$ denotes a prespecified loss function and $\operatorname{Pen}(\Theta)$ is a suitable, problem-specific regularizer. The target parameter $\Theta_0$ is fundamentally defined as the unique minimizer of the population risk, $\mathbb{E}[\ell(X,\Theta)]$, and often encodes conditional independencies (e.g., coinciding with the precision matrix under Gaussianity). Here, $\mathrm{Sym}_+(p)$ denotes the set of $p \times p$ positive definite matrices.

For example, in the Gaussian model $X \sim \mathcal{N}(0,\Sigma)$, the negative log-likelihood loss is given by
\begin{align}\label{gaussian loss}
    \ell(x, \Theta) := -\frac{1}{2} \log \det (\Theta) + \frac{1}{2} x^T \Theta x.
\end{align}
The most widely studied regularization is the graphical Lasso~\cite{friedman2008sparse},
\[
\operatorname{Pen}(\Theta) = \lambda \sum_{i<j} |\Theta_{ij}|, \quad \lambda > 0,
\]
for which extensive theory on estimation and selection consistency has been developed~\cite{RavikumarWainwrightRaskuttiYu2011}. Recent work by~\cite{graczyk2025graphical} extends this study to more general polyhedral and atomic norms, including the Fused Lasso, Group Lasso, and the SLOPE norm. 

Let 
$
\theta_+:=\vec_+(\Theta)=(\Theta_{ij})_{i>j}    
$
denote the vectorization of the strictly lower-triangular components of $\Theta$ as in \cite{graczyk2025graphical}. The SLOPE norm is determined by an ordered sequence of hyperparameters $\lambda_{1}>\lambda_2>\dots>\lambda_{p(p-1)/2}>0$,
\[
\operatorname{Pen}(\Theta) = \sum_{j=1}^{p(p-1)/2} \lambda_j |\vec_+(\Theta)|_{(j)},
\]
where $|\vec_+(\Theta)|_{(1)} \geq |\vec_+(\Theta)|_{(2)} \geq \dots \geq |\vec_+(\Theta)|_{(p(p-1)/2)}$ denote the ordered absolute values of the strictly lower-triangular entries of $\Theta$. 
Unlike the Lasso penalty, graphical SLOPE can cluster coefficients; i.e., it can yield estimates $\widehat{\Theta}_{ij}$ that are equal in magnitude. We are interested in dimensionality reduction by identifying the SLOPE pattern of the vector $\theta_{+}=\vec_+(\Theta)$, given by
\begin{equation*}
    \mathbf{patt}(\theta_+)_i=\operatorname{rank}(|\theta_+|)_i\operatorname{sign}(\theta_+)_i, \quad \forall i\in\{1,\dots,p(p-1)/2\}.
\end{equation*}
As illustrated in Example~\ref{example: pattern and pattern space} in the Appendix, the proper identification of the SLOPE pattern allows for the proper identification of true edges as well as for the identification of the groups of edges with the same absolute value of the elements on the precision matrix. This allows for a more efficient dimensionality reduction and improved estimates of $\Theta_0$ by using appropriate constraints in the corresponding optimization problem. More generally, one can formally define the pattern of a polyhedral penalty $\operatorname{Pen}(x)$ in terms of its subdifferential via the relation 
\[
\mathbf{patt}(x)= \mathbf{patt}(y) \iff \partial \operatorname{Pen}(x)=\partial \operatorname{Pen}(y),
\]
see \cite{bogdan2022pattern, graczyk2025graphical, hejny2025asymptotic, hejny2025unveiling}. For notational simplicity, we abbreviate $\mathbf{patt}(\vec_+(\Theta))$ as $\mathbf{patt}(\Theta)$, by which we refer strictly to the $p(p-1)/2$-dimensional vector corresponding to the pattern of the lower-triangular matrix entries.

In this work, we fix the dimension $p$ of the vector $X=(X_1,\dots,X_p)^T$ and analyze the asymptotic behavior of the rescaled error
\begin{equation*}
    \sqrt{n}(\widehat{\Theta}_n-\Theta_0) \quad \text{ as } \quad n\to\infty.
\end{equation*}
This classical asymptotic regime is studied for the Lasso estimator in linear models in \cite{fu2000asymptotics} and developed in greater generality in \cite{hejny2025asymptotic}. The asymptotic distribution of the error is expressed in terms of the directional derivative at $\Theta_0$ in the direction $U\in\mathrm{Sym}(p)$, defined by
\begin{equation*}
\operatorname{Pen}'(\Theta_0;U)=\lim_{\varepsilon \downarrow 0}\frac{1}{\varepsilon}(\operatorname{Pen}(\Theta_0+\varepsilon U)-\operatorname{Pen}(\Theta_0)).
\end{equation*}
Note that by the convexity of the penalty $\operatorname{Pen}(\Theta)$, the directional derivative always exists. For an explicit expression of the directional derivative for graphical SLOPE, we refer to Appendix~\ref{appendix: directional derivative}. The key asymptotic result is Corollary~3.4 in \cite{hejny2025asymptotic}, which we apply in the context of the graphical Lasso and SLOPE. For completeness, we restate the result in its graphical form as Theorem~\ref{main conditions theorem}. Our focus is on the SLOPE penalty, although the result remains valid for any polyhedral regularizer as defined in \cite{hejny2025asymptotic} or \cite{graczyk2025graphical}.

Let $\widehat{\Theta}_n$ denote the minimizer of the objective function in \eqref{main objective}, characterized by a loss function $\ell(x,\Theta)$ and the SLOPE penalty $\operatorname{Pen}(\Theta)$. Let $\Theta_0$ minimize the risk function
\begin{equation*}
    G(\Theta)=\mathbb{E}[\ell(X,\Theta)].
\end{equation*}

%Throughout this work, we define our target parameter $\Theta_0$ conceptually as the unique minimizer of the population risk $\mathbb{E}[\ell(X,\Theta)]$ for a chosen loss function. In the context of elliptical distributions, it is natural to express the data in the canonical form $X = \Theta_0^{-1/2}uR$, where $\Theta_0$ acts as a structural shape parameter. Crucially, because $\Theta_0$ is uniquely defined by the loss function, this representation imposes a specific compatibility condition on the radial component $R$. We will see that aligning the risk minimizer with this shape parameter requires $\mathbb{E}[R^2]=p$ under the Gaussian loss, and $\mathbb{E}[R^2/(\nu+R^2)]=p/(\nu+p)$ under the $t$-loss. When these conditions are violated, data rescaling is required to restore consistency.

Assume there exists an open neighborhood $B$ of $\Theta_0$ such that the map $\Theta\mapsto\ell(x,\Theta)$ is twice continuously differentiable on $B$ for every $x$. We further impose the following regularity conditions:

\begin{enumerate}[label=(\roman*)]
    \item For all $\Theta \in B$, $\lVert\nabla^2_\Theta \ell(X, \Theta)\rVert\leq M(X)$ for some $M$ with $\mathbb{E}[M(X)^2]<\infty$.
    \item $G(\Theta)$ is thrice continuously differentiable on $B$, and the Hessian $C:=\nabla^2 G(\Theta_0)$ is positive definite.
    \item The expected gradient satisfies $\mathbb{E}[\nabla_\Theta \ell(X, \Theta_0)]=0$, and the covariance matrix
    \begin{equation*}
        C_{\triangle} :=\mathbb{E}\left[\vec(\nabla_{\Theta}\ell(X,\Theta_0)) \vec(\nabla_{\Theta}\ell(X,\Theta_0))^T\right]
    \end{equation*}
    is finite.
    \item $\Theta_0$ lies in the interior of $\operatorname{Sym}_+(p)$, and the sequence $(\widehat{\Theta}_n)_{n\in\mathbb{N}}$ is uniformly tight.
    \item For any compact $K\subset \operatorname{Sym}_+(p)$, $\sup_{\Theta\in K}|\ell(X,\Theta)|\hspace{-0.1cm}\leq \hspace{-0.1cm} L(X)$ for some $L$ with $\mathbb{E}[L(X)]\hspace{-0.1cm}<\hspace{-0.1cm}\infty$.
\end{enumerate}

\begin{theorem}\label{main conditions theorem}
    Assume that conditions (i)--(v) hold. Then, the sequence $\widehat{U}_n=\sqrt{n}(\widehat{\Theta}_n-\Theta_0)$ converges in distribution to
\begin{equation}\label{main asymptotic distribution}
 \widehat{U}=\underset{U\in\operatorname{Sym}(p)}{\operatorname{argmin}}\left\{ \frac{1}{2}\vec(U)^T C\, \vec(U) - W^T \vec(U) + \operatorname{Pen}'(\Theta_0; U)\right\},
\end{equation}
where $C=\nabla^2 G(\Theta_0)$, $W\sim\mathcal{N}_{p^2}(0,C_{\triangle})$, and $C_{\triangle}$ is the covariance matrix defined in (iii). 
Moreover, 
\begin{equation}\label{pattern convergence}
    \lim_{n\to\infty}\mathbb{P}\left[\mathbf{patt}\big(\sqrt{n}(\widehat{\Theta}_n-\Theta_0)\big)=\mathfrak{p}\right]=\mathbb{P}\left[\mathbf{patt}(\widehat{U})=\mathfrak{p}\right],
\end{equation}
for every SLOPE pattern $\mathfrak{p}\in \{\mathbf{patt}(U): U\in \operatorname{Sym}(p)\}$.
\end{theorem}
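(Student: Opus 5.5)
The plan is to view $\widehat U_n$ as the minimizer of a rescaled, recentred objective and to use the argmin continuous-mapping theorem, as in the proof of Corollary~3.4 in \cite{hejny2025asymptotic}. Concretely, I set
\[
V_n(U)=\sum_{i=1}^n\Big[\ell\big(X^{(i)},\Theta_0+U/\sqrt n\big)-\ell\big(X^{(i)},\Theta_0\big)\Big]+\sqrt n\Big[\operatorname{Pen}(\Theta_0+U/\sqrt n)-\operatorname{Pen}(\Theta_0)\Big].
\]
Multiplying the objective in \eqref{main objective} by $n$ and subtracting a constant does not change the minimizer, so $\widehat U_n$ minimizes $V_n$ over $\{U:\Theta_0+U/\sqrt n\in\mathrm{Sym}_+(p)\}$. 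By (iv), this set eventually contains any fixed compact subset of $\mathrm{Sym}(p)$.

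\textbf{Step 1: the loss part.} A second-order Taylor expansion with integral remainder around $\Theta_0$, valid on $B$, gives
\[
-W_n^T\vec(U)+\tfrac12\vec(U)^T C_n(U)\vec(U),\qquad W_n=-n^{-1/2}\sum_i\vec(\nabla_\Theta\ell(X^{(i)},\Theta_0)).
\]
Here $C_n(U)$ is an averaged empirical Hessian along the segment. By (iii) and the CLT, $W_n\Rightarrow W\sim\mathcal N(0,C_\triangle)$. For the Hessian term, I use the domination in (i) together with continuity of $\nabla^2_\Theta\ell$ and a uniform law of large numbers on a shrinking neighbourhood. Condition (ii) identifies the limit as $C=\nabla^2G(\Theta_0)$, after interchanging derivative and expectation under (i). Condition (v) is what guarantees that $G$ is finite and that the interchange is justified. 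This yields
\[
\sup_{\|U\|\le K}\big|\tfrac12\vec(U)^T(C_n(U)-C)\vec(U)\big|\to0
\]
in probability for every $K$.

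\textbf{Step 2: the penalty part.} The penalty is polyhedral and piecewise linear, so for $\|U\|$ small it is exact on a neighbourhood:
\[
\operatorname{Pen}(\Theta_0+U)-\operatorname{Pen}(\Theta_0)=\operatorname{Pen}'(\Theta_0;U).
\]
Positive homogeneity of the directional derivative then makes the penalty term equal to $\operatorname{Pen}'(\Theta_0;U)$ exactly for all $\|U\|\le K$ once $n$ is large. Hence $V_n\Rightarrow V$ in $\ell^\infty$ on compacts, where $V$ is the objective in \eqref{main asymptotic distribution}. The limit $V$ is strictly convex, since $C\succ0$ and $\operatorname{Pen}'(\Theta_0;\cdot)$ is convex, so it has a unique minimizer $\widehat U$. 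Tightness of $\widehat U_n$ follows from (iv), and the argmin theorem (van der Vaart--Wellner) gives $\widehat U_n\Rightarrow\widehat U$.

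\textbf{Step 3: pattern convergence, the main obstacle.} The map $\mathbf{patt}$ is discontinuous, so \eqref{pattern convergence} does not follow from the continuous mapping theorem. I would follow \cite{hejny2025asymptotic} and work with the reduced pattern space of $\Theta_0$. Fix a pattern $\mathfrak p$. The set $\{U:\mathbf{patt}(U)=\mathfrak p\}$ is a relatively open cone inside a linear subspace $L_{\mathfrak p}$. The key observation is that, with high probability, $\widehat U_n$ and $\widehat U$ are characterized by subdifferential inclusions that involve the same polyhedral face structure: for $\widehat U_n$ this comes from the exactness in Step 2, and for $\widehat U$ from the KKT conditions. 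Writing $\widehat U_n$ as an approximate solution of the limiting KKT system, $-\nabla$ of the quadratic lies in $\partial_U\operatorname{Pen}'(\Theta_0;U)$ up to an $o_P(1)$ perturbation of $W$, I would proceed in two parts. First, show that the map from the noise vector $W$ to the pair $(\widehat U,\mathbf{patt}(\widehat U))$ is piecewise affine. Second, show that the events $\{\mathbf{patt}(\widehat U)=\mathfrak p\}$ correspond to polyhedral regions in $W$-space whose boundaries have Lebesgue measure zero. Since $W$ is Gaussian, this holds provided $C_\triangle$ is nondegenerate on the relevant subspace, or more generally for the law of $W$ on its support.

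With the boundaries null, the Portmanteau theorem applied to the perturbed noise $W_n+o_P(1)$ gives convergence of the pattern probabilities. The crux is verifying that the pattern of the KKT solution is locally constant in $W$ off a null set. I expect this to need the fact that $\operatorname{Pen}'(\Theta_0;\cdot)$ is itself a polyhedral gauge whose patterns refine those of SLOPE at $\Theta_0$, and this is exactly the content imported from \cite{hejny2025asymptotic}.
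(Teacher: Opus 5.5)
Your architecture is the natural way to prove what the paper simply imports from Corollary~3.4 of \cite{hejny2025asymptotic}: local expansion of the loss, exactness of the polyhedral penalty near $\Theta_0$ (so the penalty part of $V_n$ equals $\operatorname{Pen}'(\Theta_0;U)$ on compacts), the argmin theorem, and then the polyhedral KKT structure of the limit problem for the patterns. The genuine gap is the sentence ``Tightness of $\widehat U_n$ follows from (iv).'' Condition (iv) only gives $\widehat\Theta_n=O_P(1)$ in $\mathrm{Sym}_+(p)$, not $\sqrt n(\widehat\Theta_n-\Theta_0)=O_P(1)$. Your Steps 1--2 control $V_n$ only on $\{\|U\|\le K\}$, so nothing in your argument keeps $\widehat U_n$ from escaping, and the argmin theorem needs this tightness as a hypothesis. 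Since $\ell$ is not assumed convex in $\Theta$ (the $t$-loss \eqref{t-distribution loss} is not), you cannot bypass this with a convexity lemma such as that of Hjort and Pollard. Two steps are missing.
\emph{Consistency}: the envelope in (v), together with continuity of $\ell(x,\cdot)$, gives a uniform law of large numbers on compact $K\subset\mathrm{Sym}_+(p)$. Combined with (iv), the uniform $O(n^{-1/2})$ size of the penalty on $K$, and uniqueness of $\Theta_0$ as the minimizer of $G$, this yields $\widehat\Theta_n\to\Theta_0$ in probability. This, rather than finiteness of $G$, is the real job of (v).
\emph{Rate}: with $h=\widehat\Theta_n-\Theta_0$ and $\|h\|\le\delta$, compare the objective at $\widehat\Theta_n$ and at $\Theta_0$. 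By (i)--(ii) the averaged empirical Hessian along the segment is $\succeq cI$ with probability tending to one for small $\delta$. The score term is $O_P(n^{-1/2})\|h\|$ by (iii) and the CLT, and the penalty difference is at least $-n^{-1/2}L\|h\|$ by Lipschitz continuity. Hence $\tfrac c2\|h\|^2\le O_P(n^{-1/2})\|h\|$, so $\widehat U_n=O_P(1)$.

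The same tightness is what makes Step 3 rigorous, and with it your sketch can be made exact. With probability tending to one, $\widehat U_n$ lies where the penalty part equals $\operatorname{Pen}'(\Theta_0;\cdot)$, so its first-order condition reads $\widetilde W_n\in C\widehat U_n+\partial\operatorname{Pen}'(\Theta_0;\widehat U_n)$. Here $\widetilde W_n=W_n+(C-\bar H_n)\widehat U_n$ and $\bar H_n$ is the averaged empirical Hessian along the segment. By strict convexity of the limit problem, $\widehat U_n=\widehat U(\widetilde W_n)$ exactly, where $\widehat U(w)$ is the limit minimizer with noise $w$, and $\widetilde W_n\Rightarrow W$ by Slutsky.

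Two smaller points remain.
\begin{enumerate}
\item The SLOPE pattern of $U$ is not a function of the $\operatorname{Pen}'$-pattern alone: cross-cluster orderings and signs are not recorded by $\operatorname{Pen}'(\Theta_0;\cdot)$, which is a polyhedral sublinear function rather than a gauge. So the regions $\{w:\mathbf{patt}(\widehat U(w))=\mathfrak p\}$ are $\operatorname{Pen}'$-regions cut by finitely many further hyperplanes; they are still polyhedral with null boundaries.
\item Your proviso that $C_\triangle$ be nondegenerate on $\vec(\mathrm{Sym}(p))$ is a genuine extra hypothesis not contained in (i)--(v), and the paper does not discuss it. The fallback ``more generally for the law of $W$ on its support'' is not automatic, since a Gaussian supported on a proper subspace can put positive mass on a lower-dimensional boundary piece. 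You should either assume nondegeneracy explicitly or verify it for the models at hand.
\end{enumerate}
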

The theorem follows from Corollary 3.4~\cite{hejny2025asymptotic}.

\subsection{Graphical SLOPE with the Gaussian loss (GSLOPE)}\label{sec:graphical SLOPE with Gaussian loss}

The following result is a direct consequence of Theorem~\ref{main conditions theorem} and applies beyond normally distributed data.

\begin{theorem}\label{theorem convergence in distribution}
Let $\widehat{\Theta}_n$ be the graphical SLOPE estimator, which minimizes the objective~\eqref{main objective} with the loss function~\eqref{gaussian loss}. Assume that $\mathbb{E}[X_i^4] < \infty$ for all $i\in\{1,\dots,p\}$. Then, for the target parameter $\Theta_0 = (\mathbb{E}[XX^T])^{-1}$, the rescaled error $\sqrt{n}(\widehat{\Theta}_n-\Theta_0)$ converges in distribution to the minimizer $\widehat{U}$ defined in \eqref{main asymptotic distribution},
%\begin{align}\label{main asymptotic distribution}
%\hat{U} = \underset{U\in\mathrm{Sym}(p)}{\operatorname{argmin}}\left\{ \frac{1}{2}\vec(U)^T C \vec(U) - W^T \vec(U) + \operatorname{Pen}'(\Theta_0; U)\right\},
%\end{align}
where
\begin{align*}
    C&=\frac12(\Sigma\otimes\Sigma),\\
    C_{\triangle}&= \Cov(\vec(XX^T))/4.
\end{align*} %and $W\sim\mathcal{N}_{p^2}(0,C_{\triangle})$ with $C_{\triangle}= \frac{1}{4}\Cov(\vec(XX^T))$. Moreover,
Moreover, the SLOPE pattern of the error converges in the sense \eqref{pattern convergence}.
%\begin{equation*}
%\lim_{n\to\infty}\mathbb{P}\left[\mathbf{patt}\left(\sqrt{n}(\widehat{\Theta}_n-\Theta_0)\right)=\mathfrak{p}\right]=\mathbb{P}\left[\mathbf{patt}(\widehat{U})=\mathfrak{p}\right],
%\end{equation*}
%for every SLOPE pattern $\mathfrak{p}\in \{\mathbf{patt}(U): U\in \operatorname{Sym}(p)\}$.
\end{theorem}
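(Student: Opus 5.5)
The plan is to check conditions (i)--(v) of Theorem~\ref{main conditions theorem} for the Gaussian loss \eqref{gaussian loss}, with $\Sigma:=\mathbb{E}[XX^T]$ and $\Theta_0=\Sigma^{-1}$, and then to compute $C$ and $C_\triangle$ explicitly. Throughout, $\Theta$ is treated as an element of $\operatorname{Sym}(p)$ identified with $\vec(\Theta)$, in line with the convention in \eqref{main asymptotic distribution}.

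First, the derivatives. We have $\nabla_\Theta \ell(x,\Theta)=-\tfrac12\Theta^{-1}+\tfrac12 xx^T$ and $\nabla^2_\Theta\ell(x,\Theta)=\tfrac12(\Theta^{-1}\otimes\Theta^{-1})$. The Hessian does not depend on $x$, and it is bounded on a small closed ball $B$ around $\Theta_0$ that stays inside $\operatorname{Sym}_+(p)$. Hence (i) holds with a constant $M$. Next, $G(\Theta)=-\tfrac12\log\det\Theta+\tfrac12\operatorname{tr}(\Theta\Sigma)$ is $C^\infty$ on $\operatorname{Sym}_+(p)$. Its Hessian at $\Theta_0$ is $C=\tfrac12(\Sigma\otimes\Sigma)$, which is positive definite because $\Sigma$ has full rank; this gives (ii). For (iii), $\mathbb{E}[\nabla_\Theta\ell(X,\Theta_0)]=\tfrac12(\Sigma-\Theta_0^{-1})=0$. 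The score is $\tfrac12(XX^T-\Sigma)$, so its second-moment matrix is $\tfrac14\Cov(\vec(XX^T))$. This matrix is finite exactly because $\mathbb{E}[X_i^4]<\infty$, since $\mathbb{E}[X_i^2X_j^2]\le(\mathbb{E}X_i^4\,\mathbb{E}X_j^4)^{1/2}$ by Cauchy--Schwarz. For (v), on a compact $K\subset\operatorname{Sym}_+(p)$ both $|\log\det\Theta|$ and $\|\Theta\|$ are bounded, so $|\ell(X,\Theta)|\le c_K(1+\|X\|^2)=:L(X)$, which is integrable. Interiority of $\Theta_0$ is immediate.

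The main obstacle is the uniform tightness of $\widehat U_n=\sqrt n(\widehat\Theta_n-\Theta_0)$ required in (iv). The approach is a standard convexity argument. The objective $F_n(\Theta)=-\tfrac12\log\det\Theta+\tfrac12\operatorname{tr}(\Theta S_n)+n^{-1/2}\operatorname{Pen}(\Theta)$, with $S_n=\frac1n\sum_i X^{(i)}X^{(i)T}$, is strictly convex in $\Theta$. Reparametrize by $U$, setting $\Theta=\Theta_0+U/\sqrt n$, and consider
\[
V_n(U)=n\bigl(F_n(\Theta_0+U/\sqrt n)-F_n(\Theta_0)\bigr).
\]
This equals
\[
\tfrac12\vec(U)^TC\vec(U)-W_n^T\vec(U)+\bigl(\operatorname{Pen}(\Theta_0+U/\sqrt n)-\operatorname{Pen}(\Theta_0)\bigr)\sqrt n+o(1)
\]
uniformly on compacts. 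Here $W_n=-\tfrac{\sqrt n}{2}\vec(S_n-\Sigma)$ is $O_P(1)$ by the CLT, using the finite fourth moments, and the penalty term converges to $\operatorname{Pen}'(\Theta_0;U)$ uniformly on compacts because $\operatorname{Pen}$ is Lipschitz and polyhedral. The limit function is strictly convex with a unique minimizer. Since $V_n$ is convex, its value on the sphere $\|U\|=R$ being positive with high probability for large $R$ forces the minimizer inside the ball. This gives $\widehat U_n=O_P(1)$.

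I expect the most delicate point to be existence of $\widehat\Theta_n$. For $n\ge p$ and a.s.\ nondegenerate $S_n$, the log-det barrier together with coercivity give existence and uniqueness. In general, note that the penalty covers only off-diagonal entries, so one would handle this by working on the event $\{S_n \succ 0\}$, whose probability tends to one. Once (i)--(v) are verified, Theorem~\ref{main conditions theorem} delivers both the distributional limit \eqref{main asymptotic distribution} with the stated $C$ and $C_\triangle$, and the pattern convergence \eqref{pattern convergence}.
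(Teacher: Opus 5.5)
\textbf{Verdict: correct, with a different route for condition (iv).} Your checks of (i), (ii), (iii) and (v) are the same as the paper's:
\begin{itemize}
\item the Hessian $\tfrac12(\Theta^{-1}\otimes\Theta^{-1})$ does not depend on $X$ and is bounded on a neighbourhood with eigenvalues bounded away from $0$ and $\infty$;
\item $C=\tfrac12(\Sigma\otimes\Sigma)\succ0$;
\item the mean score vanishes, and its covariance $\tfrac14\Cov(\vec(XX^T))$ is finite by the fourth moments;
\item the envelope is $c_K(1+\|X\|^2)$.
\end{itemize}

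\textbf{How the paper handles (iv).} It only shows tightness of $\widehat\Theta_n$ itself, via consistency. The unpenalized minimizer $S_n^{-1}$ tends to $\Sigma^{-1}$ by the LLN, and the $n^{-1/2}\operatorname{Pen}$ term is declared asymptotically negligible. Hence $\widehat\Theta_n\xrightarrow{p}\Theta_0$, and tightness follows.

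\textbf{How you handle (iv).} You prove the stronger statement $\sqrt n(\widehat\Theta_n-\Theta_0)=O_P(1)$ with the Knight--Fu convexity device. You expand $V_n$ into quadratic minus linear plus penalty increment. On the sphere $\|U\|=R$ you bound it below by $\tfrac12\lambda_{\min}(C)R^2-(\|W_n\|+c)R-o(1)$; the penalty bound $c R$ holds for every $n$ because $\operatorname{Pen}$ is Lipschitz. Convexity together with $V_n(0)=0$ then confines the minimizer to the ball.

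\textbf{One step to add.} Condition (iv) as stated asks for tightness of $(\widehat\Theta_n)$ in $\operatorname{Sym}_+(p)$, not of $\widehat U_n$. Add the one-line deduction: $\widehat U_n=O_P(1)$ gives $\widehat\Theta_n\xrightarrow{p}\Theta_0$. So with high probability $\widehat\Theta_n$ lies in a compact neighbourhood of $\Theta_0$ inside $\operatorname{Sym}_+(p)$, and finitely many early $n$ are handled individually.

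\textbf{What your route buys.}
\begin{itemize}
\item It makes rigorous the paper's step ``the penalty is negligible, hence the penalized estimator is consistent,'' which as written needs exactly such an argmin-consistency argument.
\item It gives the $\sqrt n$ rate.
\item It settles existence with probability tending to one. On the event $\inf_{\|U\|=R}V_n(U)>0$, the minimum over the closed ball is attained in its interior and is global by convexity. This largely absorbs your separate concern about existence.
\end{itemize}
The cost is length. Once you have this expansion and $W_n\Rightarrow W$, the distributional limit already follows from the convexity argmin lemma. Theorem~\ref{main conditions theorem} is then really needed only for the pattern convergence \eqref{pattern convergence}, where the pattern map is discontinuous.
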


In the special case where $X \sim \mathcal{N}_p(0,\Sigma)$, the theorem above holds with $C_{\triangle} = C$. Furthermore, since $\Sigma$ is positive definite, $C$ is positive definite. Consequently, the objective in \eqref{main asymptotic distribution} is strictly convex, as it consists of a strictly convex quadratic form and the convex directional derivative $U\mapsto\operatorname{Pen}'(\Theta_0; U)$. It follows that the minimization problem \eqref{main asymptotic distribution} admits a unique solution.

As a primary application of Theorem~\ref{theorem convergence in distribution}, we consider the case where $X$ follows an elliptical distribution
\begin{equation*}
    X=\Sigma^{1/2}uR,
\end{equation*}
where $u$ is uniformly distributed on the unit sphere $\mathbb{S}^{p-1} \subset \mathbb{R}^p$ and is independent of the random radius $R\geq 0$. Observe that since $\mathbb{E}[u]=0$, we have $\mathbb{E}[X]=0$. Furthermore, using the fact that $\mathbb{E}[uu^T]=p^{-1}I_p$, we obtain
\begin{equation*}
    \Cov(X)=\mathbb{E}[R^2\Sigma^{1/2}uu^T\Sigma^{1/2}]=\mathbb{E}[R^2]p^{-1}\Sigma=\Sigma,
\end{equation*}
provided that the radial component satisfies $\mathbb{E}[R^2]=p$. We obtain the following asymptotic result for elliptical distributions:

\begin{proposition}\label{proposition for elliptical distributions}
    Assume that $X=\Sigma^{1/2}uR$ follows an elliptical distribution with $\mathbb{E}[R^2]=p$. Then the asymptotic distribution is determined by \eqref{main asymptotic distribution}, with $C=(\Sigma\otimes\Sigma)/2$ and $C_{\triangle}$ given by:
    \begin{align}\label{covariance for elliptical distributions}
        C_{\triangle} = C + \left(\frac{\mathbb{E}[R^4]}{p(p+2)}-1\right) \left(C + \frac{1}{4} \vec(\Sigma)\vec(\Sigma)^T\right).
    \end{align}
\end{proposition}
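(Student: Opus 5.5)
This follows from Theorem~\ref{theorem convergence in distribution}, which gives $C=(\Sigma\otimes\Sigma)/2$ and $C_\triangle=\Cov(\vec(XX^T))/4$. It only requires $\mathbb{E}[X_i^4]<\infty$, which amounts to $\mathbb{E}[R^4]<\infty$; if $\mathbb{E}[R^4]=\infty$ the formula is vacuous. The condition $\mathbb{E}[R^2]=p$ gives $\Cov(X)=\Sigma$, hence $\Theta_0=\Sigma^{-1}$. The task therefore reduces to computing the fourth moment matrix $\mathbb{E}[\vec(XX^T)\vec(XX^T)^T]$ for $X=\Sigma^{1/2}uR$ and subtracting $\vec(\Sigma)\vec(\Sigma)^T$.

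\textbf{Step 1: whitening.} Write $\vec(XX^T)=(\Sigma^{1/2}\otimes\Sigma^{1/2})\vec(Z Z^T)$ with $Z=uR$. By independence of $u$ and $R$,
\[
\mathbb{E}[\vec(ZZ^T)\vec(ZZ^T)^T]=\mathbb{E}[R^4]\,\mathbb{E}[\vec(uu^T)\vec(uu^T)^T].
\]

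\textbf{Step 2: spherical moments.} Use the known moments of the uniform distribution on $\mathbb{S}^{p-1}$:
\[
\mathbb{E}[u_iu_ju_ku_l]=\frac{\delta_{ij}\delta_{kl}+\delta_{ik}\delta_{jl}+\delta_{il}\delta_{jk}}{p(p+2)}.
\]
One way to justify this is to compare with a standard Gaussian $g=u\|g\|$, using $\mathbb{E}\|g\|^4=p(p+2)$. In matrix form,
\[
\mathbb{E}[\vec(uu^T)\vec(uu^T)^T]=\frac{1}{p(p+2)}\big(\vec(I)\vec(I)^T+I_{p^2}+K_{p}\big),
\]
where $K_p$ is the commutation matrix.

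\textbf{Step 3: transform back.} Conjugate by $\Sigma^{1/2}\otimes\Sigma^{1/2}$, using
\[
(\Sigma^{1/2}\otimes\Sigma^{1/2})\vec(I)=\vec(\Sigma),\qquad (A\otimes A)K_p(A\otimes A)^T=K_p(AA^T\otimes AA^T).
\]
With $\kappa:=\mathbb{E}[R^4]/(p(p+2))$ this gives
\[
\Cov(\vec(XX^T))=\kappa\big(\vec(\Sigma)\vec(\Sigma)^T+(I+K_p)(\Sigma\otimes\Sigma)\big)-\vec(\Sigma)\vec(\Sigma)^T.
\]
In the Gaussian case $\kappa=1$, this recovers $C_\triangle=C$ as stated after Theorem~\ref{theorem convergence in distribution}. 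That check is only consistent if $\tfrac14(I+K_p)(\Sigma\otimes\Sigma)$ is identified with $C=\tfrac12(\Sigma\otimes\Sigma)$.

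\textbf{Step 4: the main obstacle.} The difficulty is reconciling $(I+K_p)(\Sigma\otimes\Sigma)/4$ with $(\Sigma\otimes\Sigma)/2$. This identification holds only as quadratic forms on symmetric directions, where $K_p\vec(U)=\vec(U)$. Both $C$ and $W$ enter the limit \eqref{main asymptotic distribution} only through $\vec(U)$ with $U\in\operatorname{Sym}(p)$. So I would argue that $W^T\vec(U)$ and $\vec(U)^TC\vec(U)$ depend only on the projections onto the symmetric subspace, via $P=(I+K_p)/2$. On that subspace, $(I+K_p)(\Sigma\otimes\Sigma)/4$ coincides with $C$. This is exactly the convention implicit in the Gaussian remark $C_\triangle=C$.

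Granting this identification, rearrange the expression from Step 3 as
\[
C+(\kappa-1)C+\tfrac{\kappa-1}{4}\vec(\Sigma)\vec(\Sigma)^T=C+(\kappa-1)\big(C+\tfrac14\vec(\Sigma)\vec(\Sigma)^T\big),
\]
which is \eqref{covariance for elliptical distributions}.
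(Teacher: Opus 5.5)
Your proposal is correct and follows essentially the same route as the paper. Both arguments reduce to $\tfrac14\Cov(\vec(XX^T))$ via Theorem~\ref{theorem convergence in distribution}, whiten, use the Gaussian-derived fourth moments of $u$, identify $K_p$ with $I_{p^2}$ on vectorized symmetric matrices (the paper's $\equiv$), and rearrange. You also make precise two points the paper leaves implicit: that replacing $C_{\triangle}$ by an $\equiv$-equivalent matrix leaves the law of $\widehat U$ unchanged, and that the moment hypothesis $\mathbb{E}[X_i^4]<\infty$ amounts to $\mathbb{E}[R^4]<\infty$.
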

We establish the identity~\eqref{covariance for elliptical distributions} in Appendix~\ref{appendix: proofs for Gaussian Loss}.

\begin{example} (Gaussian distribution) The most common setting is the Gaussian model $X \sim \mathcal{N}_p(0,\Sigma)$. A key property of this model is that the sparsity pattern of the precision matrix $\Theta_0 := \Sigma^{-1}$ encodes conditional independencies: $(\Theta_0)_{ij} = 0$ if and only if $X_i \indep X_j \mid X_{-(i,j)}$ (see, for example, \cite{lauritzen1996graphical}). In the Gaussian case $X \sim \mathcal{N}_p(0,\Sigma)$, the squared radius follows a chi-squared distribution, $R^2 \sim \chi^2_p$, with $\mathbb{E}[R^2] = p$. Moreover, since $\mathbb{E}[R^4]=p(p+2)$, the correction term in \eqref{covariance for elliptical distributions} vanishes, and the asymptotic distribution \eqref{main asymptotic distribution} holds with 
    \begin{equation*}
        C_{\triangle}=\frac{1}{2}(\Sigma\otimes\Sigma)=C.
    \end{equation*}
    Alternatively, the fact that $C=C_{\triangle}$ follows from the standard result that, under correct model specification, the covariance of the score equals the expected Hessian of the loss function.
\end{example}

When the condition $\mathbb{E}[R^2]=p$ is not satisfied, such as for the multivariate t-distribution, where $\mathbb{E}[R^2]/p = \nu/(\nu-2)$, the theoretical machinery still applies, but necessitates either a rescaling of the data or an adjustment to the target parameter. We illustrate these dynamics in the following example.

\begin{example}\label{t-distributed data gaussian loss}
(Multivariate $t$-distribution) Building on the previous proposition, we now relax the assumption that $\mathbb{E}[R^2]=p$ to consider a general elliptical distribution $X=\Theta_0^{-1/2}uR$. However, for the minimizer of the Gaussian population risk to correctly identify the true parameter $\Theta_0$ governing the data distribution, we require $\mathbb{E}[XX^T]=\Theta_0^{-1}$, which holds if and only if $\mathbb{E}[R^2]=p$. If this condition is violated, the Gaussian loss targets a scaled version of the truth, and by Theorem~\ref{theorem convergence in distribution}, one estimates $\mathbb{E}[XX^T]^{-1}\neq\Theta_0$. Consequently, one must either rescale the target parameter or rescale the data to adjust for the inflation factor $\gamma:=\mathbb{E}[R^2]/p$. We opt to rescale the data as
\begin{equation*}
    \tilde{X}=\gamma^{-1/2}X= \Theta_0^{-1/2}u\tilde{R},
\end{equation*}
where $\tilde{R}=\gamma^{-1/2}R$. Since $\mathbb{E}[\tilde{R}^2]=p$, this setting reduces to the setup in Proposition~\ref{proposition for elliptical distributions} with $X$ and $R$ replaced by $\tilde{X}$ and $\tilde{R}$, respectively. We have $C=(\Sigma\otimes\Sigma)/2$, with $\Sigma = \Theta^{-1}_0= \Cov(\tilde{X})$, and
\begin{align*}
    C_{\triangle}
    & = C + \left(\frac{\gamma^{-2}\mathbb{E}[R^4]}{p(p+2)}-1\right) \left(C + \frac{1}{4} \vec(\Sigma)\vec(\Sigma)^T\right).
\end{align*}

For the multivariate $t$-distribution with degrees of freedom $\nu>4$, represented as $X \sim \mathcal{N}_p(0,\Sigma)/\sqrt{\chi^2_\nu/\nu} $, we have $X=\Sigma^{1/2}uR$ with $R=\sqrt{\chi^2_p/(\chi^2_{\nu}/\nu)}$. The scaling constant is $\gamma=\mathbb{E}[R^2]/p=\nu / (\nu-2)$. Furthermore,
\begin{equation*}
    \mathbb{E}[R^4]=\nu^2\mathbb{E}[(\chi^2_p)^2]\mathbb{E}[(\chi^2_{\nu})^{-2}] = \nu^2\frac{p(p+2)}{(\nu-2)(\nu-4)}<\infty,
\end{equation*}
where we used the identity $\mathbb{E}[(\chi^2_{\nu})^{-2}]=((\nu-2)(\nu-4))^{-1}$. This yields the following expression for the rescaled data $\tilde{X}=\gamma^{-1/2}X=\sqrt{1-2/\nu} X$:
\begin{align*}
    C_{\triangle}
    & = C + \frac{2}{\nu-4} \left(C + \frac{1}{4} \vec(\Sigma)\vec(\Sigma)^T\right).
\end{align*}
We remark that $C_{\triangle}-C$ is positive definite; hence, the asymptotic covariance for the $t$-distributed data is larger (in the partial order on symmetric matrices) than that of the correctly specified Gaussian model. However, this difference is of order $O(1/\nu)$ as $\nu\to\infty$.
\end{example}

 \subsection{Graphical SLOPE with loss induced by the t-distribution (TSLOPE)}\label{sec:graphical SLOPE with t-loss}

Consider the multivariate $t$-distribution
\begin{equation*}
    X \sim \frac{\mathcal{N}_p(0,\Sigma)}{\sqrt{\chi^2_\nu / \nu}},
\end{equation*}
with $\nu > 2$, where $\mathcal{N}_p(0,\Sigma)$ and $\chi^2_{\nu}$ denote independent random variables. This construction represents a Gaussian vector scaled by an independent scalar derived from the inverse chi-squared distribution. Equivalently, if $Z \sim \mathcal{N}_p(0, I_p)$ and $v = (\chi^2_\nu / \nu)^{-1/2}$ are independent, then $X = \Theta_0^{-1/2} Z\, v$ follows this distribution, with covariance $\operatorname{Cov}(X)=\frac{\nu}{\nu - 2}\Sigma$.

Normalizing $Z$ to obtain a unit vector $u = Z / \lVert Z \rVert$ and separating the radial component yields the canonical elliptical representation $X = \Theta_0^{-1/2} u\, R_0$. Here, $R_0 = \lVert Z \rVert v \sim \sqrt{\chi^2_p/(\chi^2_{\nu}/\nu)}$ is independent of $u$. 

More generally, we consider elliptical distributions of the form 
\begin{equation*}
    X = \Theta_0^{-1/2} u\, R,
\end{equation*}
where $u$ is uniformly distributed on the unit sphere $\mathbb{S}^{p-1}$ and $R$ is a nonnegative random variable representing the radius. For the $t$-distribution, the negative log-likelihood induces, up to an additive constant, the loss function
\begin{equation}\label{t-distribution loss}
    \ell(x, \Theta) = -\frac{1}{2} \log\det(\Theta) 
    + \frac{\nu + p}{2} \log\big(\nu + x^\top \Theta x\big).
\end{equation}
We consider the minimizer $\widehat{\Theta}_n$ of \eqref{main objective} with the loss defined in \eqref{t-distribution loss}. The subsequent theorem establishes that $\widehat{\Theta}_n$ consistently estimates the parameter $\Theta_0$ of the elliptical distribution $X=\Theta_0^{-1/2}uR$, provided that
\begin{equation}\label{key assumption on R for t-distribution}
    \mathbb{E}\left[\frac{R^2}{\nu+R^2}\right]=\frac{p}{\nu+p}.
\end{equation}
Note that the parameter $\Theta_0$ is not the precision matrix $\operatorname{Cov}(X)^{-1}$ of the $t$-distributed vector $X$, but rather a scalar multiple thereof:
\begin{equation*}
    \operatorname{Cov}(X)^{-1} = \frac{p}{\mathbb{E}[R^2]}\Theta_0.
\end{equation*}
Consequently, the estimated precision matrix can be recovered as a scalar multiple of $\widehat{\Theta}_n$. For the special case $R=R_0$, where $\mathbb{E}[R^2]/p=\nu/(\nu-2)$, the precision is given by $\operatorname{Cov}(X)^{-1} = (1-2/\nu)\Theta_0$.
\begin{lemma}\label{lemma t-hessian,covariance}
    Assume that $X=\Theta_0^{-1/2}uR$ follows an elliptical distribution satisfying \eqref{key assumption on R for t-distribution}. %which satisfies $\mathbb{E}[(\nu+p)R^2/(p(\nu+R^2))]=1$.
    Then the Hessian $C:=  \nabla_{\Theta}^2\mathbb{E}[\ell(X, \Theta_0)]$, and the covariance $C_{\triangle}:=\operatorname{Cov}(\vec(\nabla_{\Theta}\ell(X, \Theta_0)))$ have explicit representations as
    \begin{align}
    C &= \frac{1}{2}(\Sigma\otimes\Sigma) - \frac{\mathbb{E}[\xi_R]}{p(p+2)}\left(\Sigma\otimes\Sigma+\frac{1}{2}\vec(\Sigma)\vec(\Sigma)^T \right),\label{t-hessian}\\
    C_{\triangle} &=  C + \left(\frac{\nu+p+2}{p(p+2)}\mathbb{E}[\xi_R]-1\right)\left( \frac{1}{2}(\Sigma\otimes\Sigma)+\frac{1}{4}\vec(\Sigma)\vec(\Sigma)^T\right),\label{t-covariance}
\end{align}
where $\Sigma=\Theta_0^{-1}$, and $\xi_{R}=(\nu+p)R^4/(\nu+R^2)^2$. Moreover, the Hessian $C$ is positive definite.
\end{lemma}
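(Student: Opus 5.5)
The plan is to compute the gradient and Hessian of the loss \eqref{t-distribution loss} explicitly, evaluate them at $x=X=\Theta_0^{-1/2}uR$, and take expectations using the independence of $u$ and $R$ together with the standard spherical moment identities
\begin{equation*}
\mathbb{E}[uu^T]=\tfrac1p I_p,\qquad \mathbb{E}\big[\vec(uu^T)\vec(uu^T)^T\big]=\tfrac{1}{p(p+2)}\big(I_{p^2}+K_p+\vec(I_p)\vec(I_p)^T\big),
\end{equation*}
where $K_p$ is the commutation matrix. Throughout I read the Hessian as a quadratic form on $\vec(U)$ with $U\in\operatorname{Sym}(p)$. On that subspace $K_p$ acts as the identity, so $K_p$ can be replaced by $I_{p^2}$. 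This is what produces the factor $2$ in front of $\Sigma\otimes\Sigma$ in \eqref{t-hessian} and in the covariance. I would also use the identity
\begin{equation*}
(\Sigma^{1/2}\otimes\Sigma^{1/2})\vec(I_p)=\vec(\Sigma).
\end{equation*}

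\emph{Step 1 (derivatives).} The derivatives of the loss are
\begin{align*}
\nabla_\Theta\ell &= -\tfrac12\Theta^{-1}+\tfrac{\nu+p}{2}\,\frac{xx^T}{\nu+x^T\Theta x},\\
\nabla^2_\Theta\ell &= \tfrac12\Theta^{-1}\otimes\Theta^{-1}-\tfrac{\nu+p}{2}\,\frac{\vec(xx^T)\vec(xx^T)^T}{(\nu+x^T\Theta x)^2}.
\end{align*}
At $\Theta_0$ we have $x^T\Theta_0x=R^2$ and $xx^T=R^2\Sigma^{1/2}uu^T\Sigma^{1/2}$. Set $a:=(\nu+p)R^2/(\nu+R^2)$. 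Condition \eqref{key assumption on R for t-distribution} says exactly that $\mathbb{E}[a]=p$, and then $\mathbb{E}[a\,uu^T]=I_p$, which gives $\mathbb{E}[\nabla\ell(X,\Theta_0)]=0$. The second-derivative term is bounded, since $R^4/(\nu+R^2)^2\le 1$. Dominated convergence therefore lets me interchange differentiation and expectation near $\Theta_0$, so $C$ is the expected Hessian of the loss.

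\emph{Step 2 (Hessian and score covariance).} Taking the expectation of the Hessian term, the random factor is $\tfrac{\nu+p}{2}R^4/(\nu+R^2)^2=\tfrac12\xi_R$. Combining it with the fourth-moment identity and the two facts above gives \eqref{t-hessian} directly. For $C_\triangle$, I would write the score as
\begin{equation*}
s=\tfrac12\big(a\,\vec(\Sigma^{1/2}uu^T\Sigma^{1/2})-\vec(\Sigma)\big).
\end{equation*}
Expanding $\mathbb{E}[ss^T]$, the cross terms collapse by $\mathbb{E}[a\,uu^T]=I_p$, and the quadratic term involves $\mathbb{E}[a^2]\cdot\mathbb{E}[\vec(uu^T)\vec(uu^T)^T]$ with $a^2=(\nu+p)\xi_R$. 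This yields
\begin{equation*}
C_\triangle=\frac{(\nu+p)\mathbb{E}[\xi_R]}{4p(p+2)}\big(2\Sigma\otimes\Sigma+\vec(\Sigma)\vec(\Sigma)^T\big)-\tfrac14\vec(\Sigma)\vec(\Sigma)^T.
\end{equation*}
A short bookkeeping check shows that \eqref{t-covariance} agrees with this: in $C+(k-1)(\cdot)$ with $k=(\nu+p+2)\mathbb{E}[\xi_R]/(p(p+2))$, the coefficients of $\Sigma\otimes\Sigma$ and of $\vec(\Sigma)\vec(\Sigma)^T$ reduce to $(\nu+p)\mathbb{E}[\xi_R]/(2p(p+2))$ and $(\nu+p)\mathbb{E}[\xi_R]/(4p(p+2))-\tfrac14$, respectively.

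\emph{Step 3 (positive definiteness).} I expect this to be the only non-mechanical step. Put $c=\mathbb{E}[\xi_R]/(p(p+2))$ and $V=\Sigma^{1/2}U\Sigma^{1/2}$, which ranges over all nonzero symmetric matrices as $U$ does. The quadratic form is
\begin{equation*}
\vec(U)^TC\vec(U)=\tfrac12\operatorname{tr}(V^2)-c\big(\operatorname{tr}(V^2)+\tfrac12(\operatorname{tr}V)^2\big).
\end{equation*}
Split $V$ into its traceless part and its multiple of $I_p$; the two pieces are orthogonal in $\operatorname{tr}(V^2)$. On the traceless part the form is $(\tfrac12-c)\operatorname{tr}(V^2)$. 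On $V=I_p$ it equals $\tfrac p2-c(p+\tfrac{p^2}2)$, which is positive iff $c<1/(p+2)$. Since $1/(p+2)\le 1/2$, it suffices to prove $\mathbb{E}[\xi_R]<p$. Here I use $t:=R^2/(\nu+R^2)\in[0,1)$: then $\xi_R=(\nu+p)t^2<(\nu+p)t$ whenever $t>0$. Hence
\begin{equation*}
\mathbb{E}[\xi_R]<(\nu+p)\mathbb{E}[t]=p
\end{equation*}
by \eqref{key assumption on R for t-distribution}. The inequality is strict because $\mathbb{E}[t]>0$ forces $\mathbb{P}(0<t<1)>0$. This gives $C\succ0$.
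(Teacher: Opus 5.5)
Your proposal is correct and follows the paper's proof essentially step for step. It uses the same derivative formulas, the same spherical fourth-moment identity with $K_p$ acting as the identity on $\vec(\operatorname{Sym}(p))$, and the same key bound $\mathbb{E}[\xi_R]<(\nu+p)\,\mathbb{E}[R^2/(\nu+R^2)]=p$. Your traceless-plus-identity split in Step 3 is the paper's eigen-decomposition of $Q$ along $\vec(I_p)$ and its orthogonal complement, only phrased as a quadratic form on $\operatorname{Sym}(p)$ rather than on all of $\mathbb{R}^{p^2}$.
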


\begin{remark}\label{t-distributed data t-loss}
Observe that for the special case of a $t$-distributed variable $X\sim \mathcal{N}_p(0,\Sigma)/\sqrt{\chi^2_\nu/\nu}$, where $R=R_0\sim\sqrt{\chi^2_p/(\chi^2_\nu/\nu)}$ and $\mathbb{E}\left[\xi_{R_0}\right]=p(p+2)/(\nu+p+2)$, the quantities simplify to
\begin{align*}
    C = C_{\triangle}= \frac{1}{2}(\Sigma\otimes\Sigma) - \frac{1}{\nu+p+2}\left(\Sigma\otimes\Sigma+\frac{1}{2}\vec(\Sigma)\vec(\Sigma)^T \right).
\end{align*}
We observe that as $\nu \to \infty$, the limiting quantities approach $(\Sigma\otimes\Sigma)/2$, as in the Gaussian model.
\end{remark}

\begin{theorem}\label{theorem convergence in distribution for t distribution}
Assume that $X=\Theta_0^{-1/2}uR$ follows an elliptical distribution satisfying \eqref{key assumption on R for t-distribution}, and $\mathbb{E}[\log(1+R^2)]<\infty$. %which satisfies $\mathbb{E}[(\nu+p)R^2/(p(\nu+R^2))]=1$.
The rescaled error $\sqrt{n}(\widehat{\Theta}_n-\Theta_0)$ converges in distribution to
\begin{align}\label{main asymptotic distribution t distribution}
\hat{U}=\underset{U\in\mathrm{Sym}(p)}{\operatorname{argmin}}\left\{ \frac{1}{2}\vec(U)^T C\, \vec(U) - W^T \vec(U) + \operatorname{Pen}'(\Theta_0; U)\right\},
\end{align}
where $W\sim\mathcal{N}_{p^2}(0,C_{\triangle})$ with $C$ and $C_{\triangle}$ given by \eqref{t-hessian} and \eqref{t-covariance}.

Moreover, 
\begin{equation*}
\lim_{n\to\infty}\mathbb{P}\left[\mathbf{patt}(\sqrt{n}(\widehat{\Theta}_n-\Theta_0))=\mathfrak{p}\right]=\mathbb{P}\left[\mathbf{patt}(\widehat{U})=\mathfrak{p}\right],
\end{equation*}
for every SLOPE pattern $\mathfrak{p}\in \{\mathbf{patt}(U): U\in \operatorname{Sym}(p)\}$.

\end{theorem}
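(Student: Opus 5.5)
The plan is to verify conditions (i)--(v) of Theorem~\ref{main conditions theorem} for the $t$-loss \eqref{t-distribution loss} and then read off $C$ and $C_{\triangle}$ from Lemma~\ref{lemma t-hessian,covariance}; the distributional limit \eqref{main asymptotic distribution t distribution} and the pattern convergence then follow directly.

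\emph{Derivatives.} First compute
\[
\nabla_\Theta\ell(x,\Theta)=-\tfrac12\Theta^{-1}+\tfrac{\nu+p}{2}\,\frac{xx^T}{\nu+x^T\Theta x},
\]
\[
\nabla^2_\Theta\ell(x,\Theta)=\tfrac12(\Theta^{-1}\otimes\Theta^{-1})-\tfrac{\nu+p}{2}\,\frac{\vec(xx^T)\vec(xx^T)^T}{(\nu+x^T\Theta x)^2}.
\]
Take $B$ to be a small compact ball around $\Theta_0$ in which every $\Theta$ satisfies $\Theta\succeq c\,\Theta_0$ for some $c>0$. On $B$ we have $\nu+x^T\Theta x\ge c'(\nu+\|x\|^2)$. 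Hence $\|xx^T\|/(\nu+x^T\Theta x)$ is bounded uniformly in $x$ and $\Theta\in B$, and so are the gradient and the Hessian.

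\emph{Conditions (i)--(iii).} The uniform bound gives (i) with $M$ constant. Twice continuous differentiability of $\ell$ on $B$ is clear. Because all derivatives up to third order are bounded by integrable (indeed constant) envelopes on $B$, dominated convergence allows differentiation under the expectation, so $G$ is $C^3$ on $B$. Positive definiteness of $C$ is part of Lemma~\ref{lemma t-hessian,covariance}, which gives (ii). For (iii), write $X=\Theta_0^{-1/2}uR$, so that $X^T\Theta_0X=R^2$ and $\mathbb{E}[XX^T\mid R]=R^2\Theta_0^{-1}/p$. Then
\[
\mathbb{E}\nabla\ell(X,\Theta_0)=-\tfrac12\Sigma+\tfrac{\nu+p}{2p}\,\mathbb{E}\!\left[\frac{R^2}{\nu+R^2}\right]\Sigma=0
\]
exactly by \eqref{key assumption on R for t-distribution}. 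Finiteness of $C_{\triangle}$ follows from boundedness of the score, and its explicit form is \eqref{t-covariance}.

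\emph{Condition (v).} On a compact $K\subset\mathrm{Sym}_+(p)$, $\log\det\Theta$ is bounded, and eigenvalue bounds on $K$ give
\[
\log(\nu+x^T\Theta x)\le \text{const}+\log(1+\|x\|^2)\le \text{const}'+\log(1+R^2).
\]
The lower bound $\log(\nu+x^T\Theta x)\ge\log\nu$ holds trivially. Thus $L(X)=a+b\log(1+R^2)$ works, and it is integrable by the assumption $\mathbb{E}[\log(1+R^2)]<\infty$.

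\emph{Condition (iv), the main obstacle.} Interiority of $\Theta_0$ is immediate, so the work is uniform tightness of $\widehat{\Theta}_n$. I would argue through convexity.

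\begin{enumerate}[label=(\alph*)]
\item The map $\Theta\mapsto\log(\nu+x^T\Theta x)$ is concave, so $\ell$ is not obviously convex. I would therefore either reparametrize or, more simply, establish consistency of $\widehat{\Theta}_n$ via an argmin argument and then obtain $\sqrt n$-tightness from the local quadratic expansion.
\item For consistency, use (v) and a uniform law of large numbers on compacts to get uniform convergence of the empirical risk to $G$. Uniqueness of the minimizer $\Theta_0$ of $G$ holds for elliptical laws under the moment condition.
\item To rule out escape to the boundary of $\mathrm{Sym}_+(p)$ (eigenvalues tending to $0$ or $\infty$), use the standard coercivity argument for Tyler/$t$-type M-estimators. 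As $\Theta\to$ boundary, $-\tfrac12\log\det\Theta$ dominates when eigenvalues go to $0$. When eigenvalues go to $\infty$, the data term grows like $\tfrac{\nu+p}{2}\log$ of the large eigenvalue times the empirical mass off the corresponding subspace. Since $X$ has a density-like spherical part, no subspace carries mass at least $d/(\nu+p)$, so the data term beats $-\tfrac12\log\det$.
\item The penalty is $O(n^{-1/2})$ on compacts and does not affect this step.
\item Given consistency, a Taylor expansion inside $B$ with $C\succ0$, together with the bounded score (CLT), yields $\widehat U_n=O_P(1)$.
\end{enumerate}

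With (i)--(v) verified, Theorem~\ref{main conditions theorem} gives \eqref{main asymptotic distribution t distribution} and the pattern convergence.
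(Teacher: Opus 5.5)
Your overall route is the paper's. You verify (i)--(v) of Theorem~\ref{main conditions theorem} for the $t$-loss and read off $C$, $C_{\triangle}$ from Lemma~\ref{lemma t-hessian,covariance}. Your checks of (i)--(iii) and (v) are fine. For (i) yours is actually cleaner: $\lVert xx^T\rVert/(\nu+x^T\Theta x)\le 1/c'$ bounds the Hessian in norm from both sides, whereas the paper's Loewner bound $\nabla^2_\Theta\ell\preceq\frac12(\Theta^{-1}\otimes\Theta^{-1})$ only controls it from above. Steps (b) and (e) are superfluous. Condition (iv) only asks that $\widehat\Theta_n$ be uniformly tight in $\mathrm{Sym}_+(p)$; consistency and the $\sqrt n$ rate are delivered by Theorem~\ref{main conditions theorem} itself.

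The gap is step (c), which is where the paper does its real work (Lemma~\ref{lemma: uniform tightness}). As written it does not yield uniform tightness, for two reasons.

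First, the threshold is misstated. Suppose $k$ eigenvalues of $\Theta$ are of order $\lambda\to\infty$ and the rest stay bounded. Then $-\frac12\log\det\Theta\approx-\frac k2\log\lambda$, while the data term grows like $\frac{\nu+p}{2}\,m\log\lambda$, where $m$ is the empirical mass \emph{off} the complementary $(p-k)$-dimensional eigenspace. So you need $m>k/(\nu+p)$, i.e.\ a $d$-dimensional subspace must carry mass below $(\nu+d)/(\nu+p)$. Your condition ``no subspace carries mass at least $d/(\nu+p)$'' is not this; read with $d=\dim V$, it cannot even hold for $V=\{0\}$.

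Second, and more importantly, the relevant subspace is determined by $\widehat\Theta_n$, hence by the data. You therefore need a lower bound on the \emph{empirical} mass away from every hyperplane, uniformly in the direction and in $n$. You also need a margin $\delta>0$, because points with $v^TX_i\approx 0$ contribute no $\log\lambda$ growth. Neither the population fact that $X$ has a spherical part nor the per-sample Kent--Tyler existence theory gives a compact set that works for all $n$ simultaneously.

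The paper gets this in three steps:
\begin{enumerate}[label=(\arabic*)]
\item It bounds $\log\det\Theta\le p\log\lambda_{\max}(\Theta)$, which reduces the problem to the single direction $v_1$.
\item It applies a Glivenko--Cantelli argument over the VC class $\{x:|v^Tx|\ge\delta\}$, $v\in\mathbb{S}^{p-1}$. This gives $\inf_v n^{-1}\sum_i\mathbb{I}\{|v^TX_i|\ge\delta\}$ asymptotically at least $\inf_v\mathbb{P}(|v^TX|\ge\delta)$.
\item It chooses $\delta$ via Dini so that this infimum exceeds $p/(\nu+p)$ with room to spare.
\end{enumerate}
Once $\lambda_{\max}$ is controlled, the $-\frac12\log\lambda_{\min}$ term handles the lower end, as you say.

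Also, the needed population inequality is not a density statement, since $R$ may have an atom at $0$. It follows instead from \eqref{key assumption on R for t-distribution}: $R^2/(\nu+R^2)<1$ on $\{R>0\}$ and vanishes on $\{R=0\}$, so $\mathbb{P}(R>0)>p/(\nu+p)$. Moreover $\mathbb{P}(|v^TX|>0)=\mathbb{P}(R>0)$ for every $v\in\mathbb{S}^{p-1}$.
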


\section{Simulations}\label{sec:simulations}

To estimate the lower-triangular coefficients of the precision $\Theta_0$, we use the SLOPE estimator with the penalty function:
\[
\mathrm{Pen}(\Theta) =\alpha \cdot \sum_{j=1}^{p(p-1)/2} \lambda_j |\Theta|_{(j)},
\]
where $\alpha>0$ is the penalty strength, $|\Theta|_{(1)} \geq |\Theta|_{(2)} \geq \dots \geq |\Theta|_{(p(p-1)/2)}$ denote the ordered absolute values of the strictly lower-triangular entries of $\Theta$. We use the BH($q$)-normalized penalty sequence:
\begin{equation}\label{BH-sequence normalized}
\lambda_j = \Phi^{-1} \left(1 - \frac{qj}{p(p-1)} \right)/\bar{\lambda}, \quad j = 1, 2, \dots, p(p-1)/2,
\end{equation}
where $\Phi^{-1}(\cdot)$ denotes the quantile function of the standard normal distribution, $q\in (0,1)$, and the normalization constant is given by
\begin{equation*}
    \bar{\lambda}=\frac{2}{p(p-1)}\sum_{i=1}^{p(p-1)/2}\Phi^{-1} \left(1 - \frac{qj}{p(p-1)} \right).
\end{equation*}
The constant $\bar{\lambda}$ rescales the penalty sequence such that the average penalty coefficient equals $1$. Figure~\ref{fig:RMSE} illustrates the Root Mean Square Error (RMSE) of the estimator:
\begin{equation}\label{finite n RMSE}
    \operatorname{RMSE}(\widehat{\Theta}_n) = \sqrt{\mathbb{E}\left[ \lVert\vech(\widehat{\Theta}_n) - \vech(\Theta_0)\rVert_2^2\right]},
\end{equation}
where $\widehat{\Theta}_n$ is the SLOPE estimator of $\Theta_0$ and $\vech(\cdot)$ denotes the vectorization of the lower-triangular part (including the diagonal). We set $q=0.5$ and plot the $\operatorname{RMSE}(\widehat{\Theta}_n)$ as a function of the tuning parameter $\alpha$ for various sample sizes $n$. We further compare the empirical RMSE of $\widehat{\Theta}_n$ to its asymptotic counterpart
\begin{equation}\label{asymptotic RMSE}
   \lim_{n\to\infty}\sqrt{n}\operatorname{RMSE}(\widehat{\Theta}_n)=\sqrt{\mathbb{E}\left[\lVert\vech(\widehat{U})\rVert^2_2\right]}.
\end{equation}
The term on the right-hand side is approximated by averaging the squared norms of $M=100$ independent realizations of the vector $\widehat{U}$, which are obtained by solving the asymptotic optimization problem in equation~\eqref{main asymptotic distribution} for corresponding realizations of the noise vector $W$.

We assume the data follows a multivariate Gaussian distribution $X \sim \mathcal{N}_p(0,\Sigma)$, where the covariance matrix has a block-diagonal structure $\Sigma= I_2 \otimes \Sigma^0$. Here, $\Sigma^0$ is a $10\times10$ matrix with compound symmetry, where $\Sigma^0_{i,i} = 1$ and $\Sigma^0_{i,j} = 0.2$ for $i \neq j$. $I_2$ denotes the $2\times2$ identity matrix, and $\otimes$ denotes the Kronecker product.

\begin{figure}[ht]
    \centering
    {\includegraphics[width=0.8\textwidth]{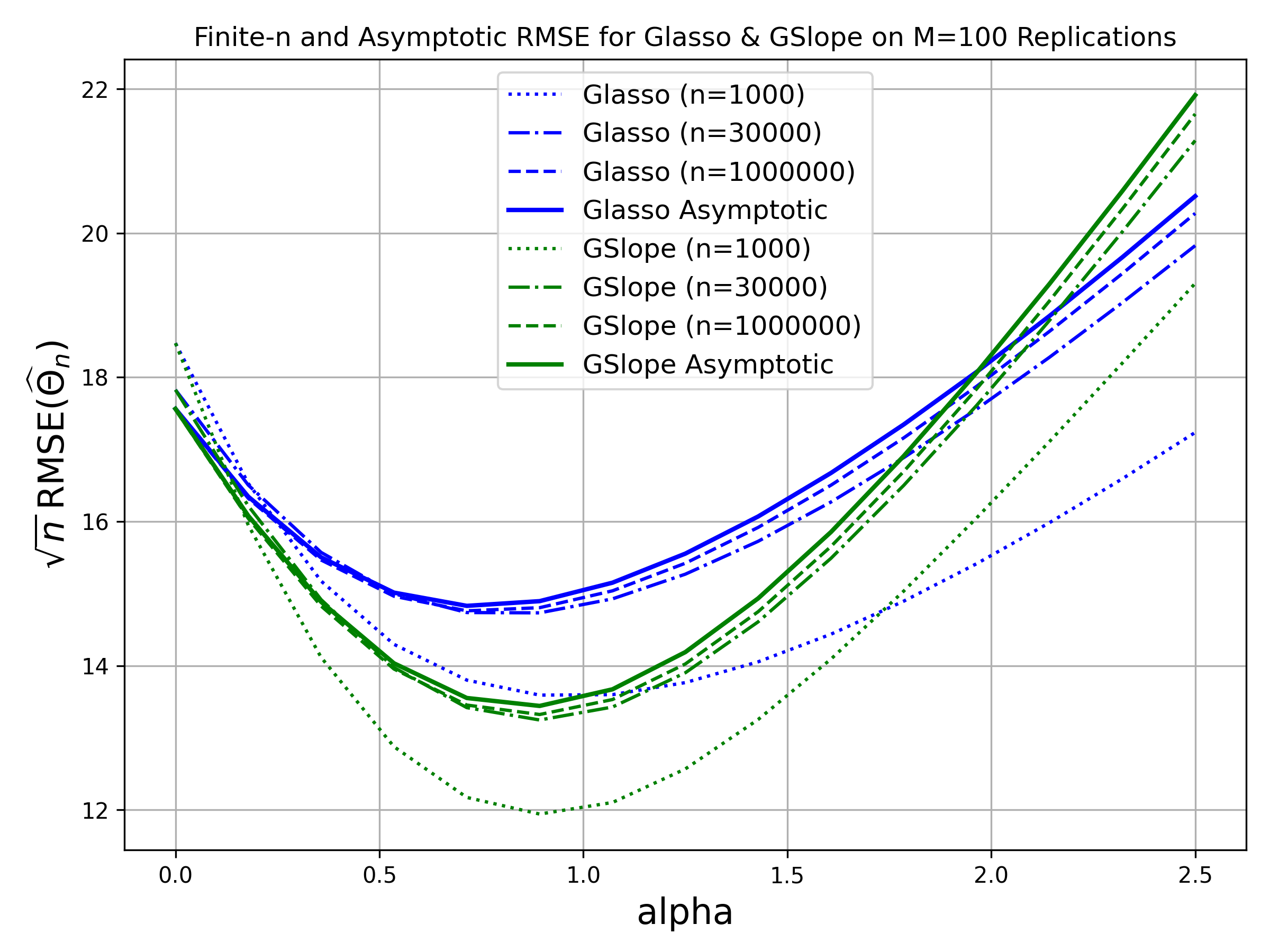}}
   \caption{Convergence of the rescaled empirical $\operatorname{RMSE}$, defined as $\sqrt{n} \cdot \operatorname{RMSE}(\widehat{\Theta}_n)$ (where $\operatorname{RMSE}$ is given by Equation \eqref{finite n RMSE}), to the asymptotic error (Equation \eqref{asymptotic RMSE}) as a function of the sample size $n$. The figure compares the performance of Graphical Lasso and Graphical SLOPE using the BH($q=0.5$) normalized sequence.}
    \label{fig:RMSE}
\end{figure}

\subsection{Clustering Measure}
One method to quantify how well the estimator $\widehat{\Theta}_n$ recovers the clusters of $\Theta_0$ is to measure the $\ell_2$-distance from the pattern space of $\Theta_0$. To formalize this, let $\theta^0_{+} = \vec_{+}(\Theta_0) = (\Theta_{ij})_{i > j}$ denote the strictly sub-diagonal vectorization of $\Theta_0$ in $\mathbb{R}^{p(p-1)/2}$. We define the pattern space at $\theta^0_+$ as
\begin{equation*}
    V_0:=\operatorname{span}\{u\in \mathbb{R}^{p(p-1)/2}: \mathbf{patt}(u)=\mathbf{patt}(\theta^0_{+})\}.
\end{equation*}
For a more comprehensive analysis of the pattern space, we refer to \cite{hejny2025asymptotic}. 

Let $P$ denote the orthogonal projection onto $V_0$, and consider the strictly sub-diagonal error $\hat{u}_n:=\sqrt{n}(\vec_+(\widehat{\Theta}_n)-\vec_+(\Theta_0))$. This error decomposes as 
\begin{equation*}
    \lVert \hat{u}_n\rVert_2^2=\underbrace{\lVert P\hat{u}_n\rVert_2^2}_{\text{bias error}}+\underbrace{\lVert (I-P)\hat{u}_n\rVert_2^2}_{\text{clustering error}}.
\end{equation*}
The clustering error corresponds to the squared $\ell_2$-distance of $\hat{u}_n$ from the subspace $V_0$. For an illustration of this concept, we refer to Example~\ref{example: clustering error} in the appendix. We define the root-$n$ rescaled clustering RMSE as:
\begin{equation*}
    \operatorname{RMSE-CL}(\widehat{\Theta}_n):=\sqrt{\mathbb{E}\left[\lVert (I-P)\hat{u}_n\rVert_2^2\right]}.
\end{equation*}
Analogously, using the limiting variable $\hat{u}:=\vec_+(\widehat{U})$ derived from \eqref{main asymptotic distribution}, we define the asymptotic clustering error as
\begin{equation}\label{clustering error asymptotic}
    \operatorname{RMSE-CL}:=\sqrt{\mathbb{E}\left[\lVert (I-P)\hat{u}\rVert_2^2\right]}.
\end{equation}
This quantity represents the asymptotic limit of $\operatorname{RMSE-CL}(\widehat{\Theta}_n)$.

In Figure~\ref{fig:RMSE_cluster}, we compare the asymptotic strictly sub-diagonal Root Mean Squared Error, given by $(\mathbb{E}[\lVert\vec_{+}(\widehat{U})\rVert_2^2])^{1/2}$, with the clustering error $(\mathbb{E}[\lVert(I-P)\vec_{+}(\widehat{U})\rVert_2^2])^{1/2}$ defined in \eqref{clustering error asymptotic}, as a function of the tuning parameter $\alpha$. The comparison includes the graphical Lasso and graphical SLOPE; for the latter, we use the normalized BH sequence \eqref{BH-sequence normalized} with $q=0.5$.

\begin{figure}[ht]
    \centering
    {\includegraphics[width=0.6\textwidth]{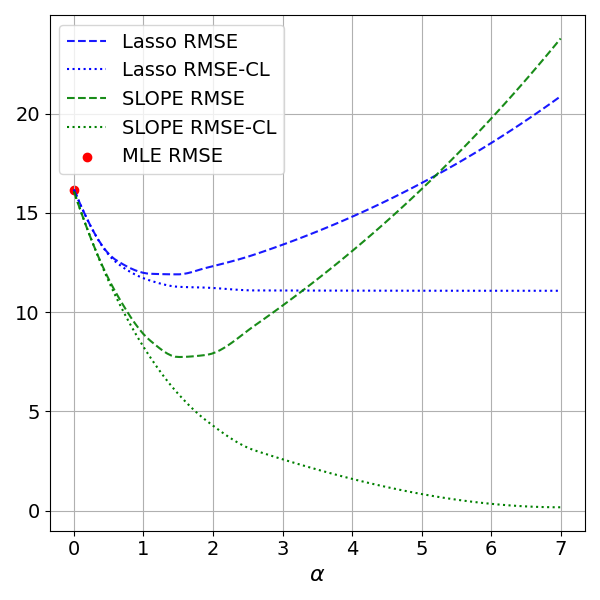}}
    \caption{Comparison of the asymptotic clustering error $\operatorname{RMSE-CL}$ (Equation \eqref{clustering error asymptotic}) versus the asymptotic total $\operatorname{RMSE}$ (Equation \eqref{asymptotic RMSE}) for Graphical Lasso and Graphical SLOPE with the normalized BH($q=0.5$) sequence.}
    \label{fig:RMSE_cluster}
\end{figure}

\begin{figure}[ht]
    \centering
    
    % --- First Subfigure ---
    \begin{subfigure}[b]{0.32\textwidth}
        \centering
        % Replace with your filename for nu=3
        \includegraphics[width=\linewidth]{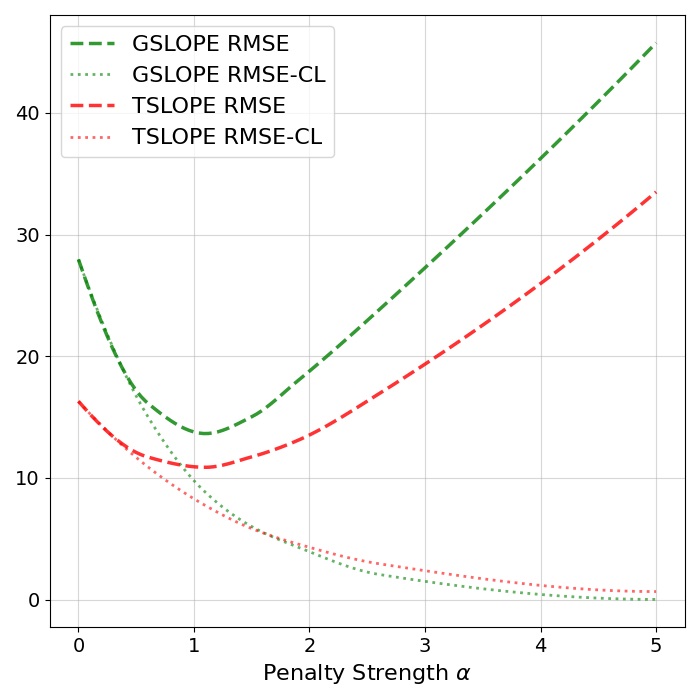}
        \caption{$\nu = 5$}
        \label{fig:nu3}
    \end{subfigure}
    \hfill % Adds flexible space between images
    % --- Second Subfigure ---
    \begin{subfigure}[b]{0.32\textwidth}
        \centering
        \includegraphics[width=\linewidth]{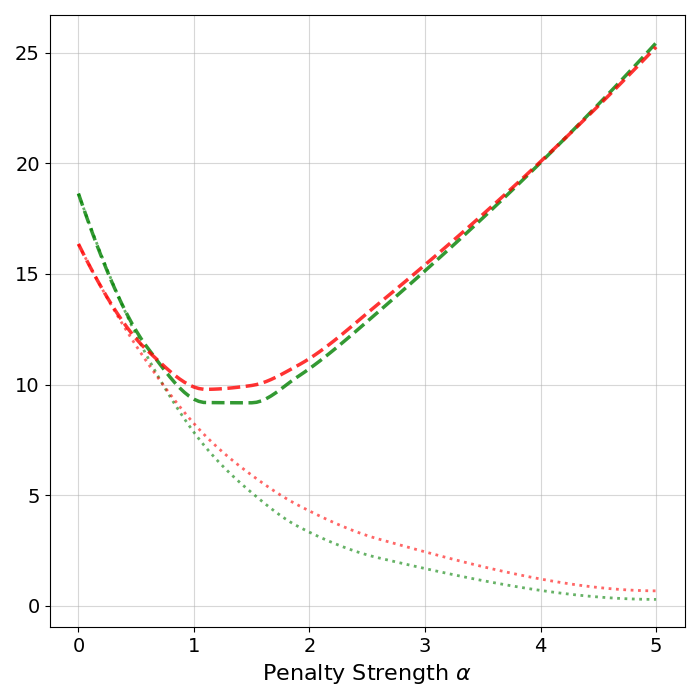}
        \caption{$\nu = 10$}
        \label{fig:nu5}
    \end{subfigure}
    \hfill % Adds flexible space between images
    % --- Third Subfigure ---
    \begin{subfigure}[b]{0.32\textwidth}
        \centering
        % Replace with your filename for nu=10
        \includegraphics[width=\linewidth]{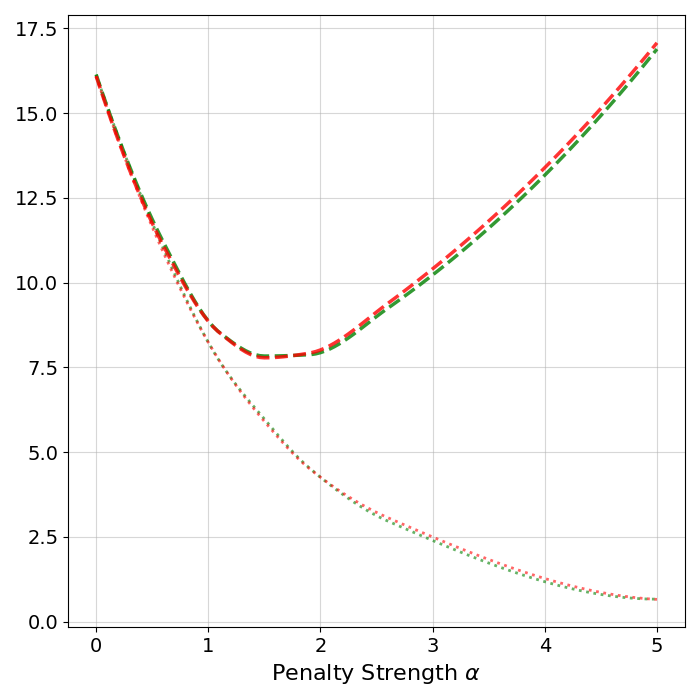}
        \caption{$\nu = 1000$}
        \label{fig:nu10}
    \end{subfigure}

    \caption{ Comparison of the asymptotic GSLOPE and TSLOPE errors, defined as $\lim_{n\to\infty}\mathbb{E}[\lVert\vech(\sqrt{n}(\widehat{\Theta}_n-\Theta_0)) \rVert_2^2]$, for heavy-tailed $t$-distributed data. The degrees of freedom are $\nu=5$ (left), $\nu=10$ (center), and $\nu=1000$ (right). TSLOPE uses the normalized BH($q=0.5$) sequence. The GSLOPE penalty is rescaled by factors of $2.7$, $1.5$, and $1$, respectively, such that the minima of the curves match.}
    \label{fig:RMSE_GLOPE_TSLOPE}
\end{figure}

Figure~\ref{fig:RMSE_GLOPE_TSLOPE} compares the asymptotic error of GSLOPE (using the Gaussian loss \eqref{gaussian loss}) and TSLOPE (using the $t$-distribution loss \eqref{t-distribution loss}) when the data is generated by a multivariate $t$-distribution with degrees of freedom $\nu \in \{5, 10, 1000\}$ and covariance parameter $\Sigma=\Theta^{-1}_0$. The asymptotic distribution for both methods is given by \eqref{main asymptotic distribution}, which is fully characterized by the Hessian $C$, the covariance $C_{\triangle}$, and the penalty sequence. We set the penalty sequence to the normalized BH sequence \eqref{BH-sequence normalized} with $q=0.5$.

To ensure GSLOPE consistently estimates $\Theta_0$, we rescale the data by $\sqrt{1-2/\nu}$ prior to estimation. The asymptotic RMSE for GSLOPE is determined by the matrices:
\begin{align*}
    C=\frac{1}{2}(\Sigma\otimes\Sigma)\quad \text{and} \quad C_{\triangle}  
    & = C + \frac{2}{\nu-4} \left(C + \frac{1}{4} \vec(\Sigma)\vec(\Sigma)^T\right),
\end{align*}
as detailed in Example~\ref{t-distributed data gaussian loss}. For the correctly specified minimization using the $t$-loss, the RMSE is determined by
\begin{align*}
    C = C_{\triangle}= \frac{1}{2}(\Sigma\otimes\Sigma) - \frac{1}{\nu+p+2}\left(\Sigma\otimes\Sigma+\frac{1}{2}\vec(\Sigma)\vec(\Sigma)^T \right),
\end{align*}
following Remark~\ref{t-distributed data t-loss}. As expected, for $\nu=5$, the inflation factor $2/(\nu-4)=2$ in GSLOPE dominates, resulting in a large error. However, this factor decays rapidly; for $\nu=10$, it drops to $1/3$, and, somewhat surprisingly, the correction factor $(\nu+p+2)^{-1}$ in TSLOPE causes a slight reversal in the relative performance in favor of GSLOPE. Finally, for $\nu=1000$, both expressions converge to the Gaussian limit, yielding indistinguishable performance.

\newpage
\subsection{Hidden Factor Structure}\label{sec: hidden factor structure}
In what follows, we want to investigate how GSlope and TSlope, in comparison to GLasso and TLasso, are able to extract the dependence structure among variables that follow a block correlation structure, i.e.  variables having a high dependence among those belonging to the same group and a low dependence to those of other groups. To create such a simulated environment, we draw on the financial literature and utilize a so called hidden factor structure, in which time series of asset prices (for example stock equities) are said to have exposures to various underlying combinations of so called risk factors - i.e. systematic sources of risk for which investors may demand a compensation if they cannot diversify them (see, e.g. \cite{ROSS1976, FAMA1993, FAMA2015}).\\
Our simulation set-up follows closely the set-up in \cite{Kremer2020}: That is, assume that the underlying data generating process follows a hidden factor structure, in which the observed time series is dependent on a linear combination of a set of risk factors, each of which can have a distinct loading. Furthermore, assume that there are $k$ assets, $r$ (risk) factors and $t$ observations with $\bold{F}_{t \times r} = [ \bold{f}_1\ \bold{f}_2\ ...\ \bold{f}_{r}]$ where $\bold{f_{j}}$ is the $t \times 1$ return vector of the $jth$ (risk) factor. Given that $\bold{B}_{r \times k}$ holds the loadings of the individual risk factors, the $t\times k$ hidden factor time series is given as:
\begin{equation}\label{HF_Time_Series}
    \bold{R}_{HF} = \bold{F} \times \bold{B} + \bold{\epsilon}
\end{equation}
where $\bold{\epsilon}$ is a $t \times k$ matrix of error terms.\\
For our simulation, we assume the following set-up:
\begin{enumerate}
    \item let $t=1000$, $k=300$, $r=6$, and let there be three distinct groups of assets
    \item each risk factor $j$ is independent from each other and follows a multivariate t distribution $\bold{f}_{j} \sim t_{p}(0, \bold{I}_{r \times r}, \nu)$ with $\nu = 4$ degrees of freedom and where $\bold{I}_{r \times r}$ is the identity matrix
    \item the loading matrix $B_{r \times k}$ is constructed by repeating each of the following column vectors $\frac{k}{3}$ times: $[0.5\ 0.5\ 0\ 0\ 0\ 0]'$, $[0\ 0\ 0.5\ 0.5\ 0\ 0]'$, $[0\ 0\ 0\ 0\ 0.5\ 0.5]'$   
    \item the error vectors $\epsilon_j$, for $j = 1, \ldots, k$, are mutually independent across assets and independent of the risk factors. They are assumed to follow a multivariate $t$-distribution with mean zero, scale matrix $0.15\times I_{k \times k}$, and $\nu = 4$ degrees of freedom.
\end{enumerate}
Given the set-up above, we generate three distinct groups of assets that each have an exposure to exactly two distinct risk factors to which assets from other groups do not have an exposure. Furthermore, from (\ref{HF_Time_Series}) it follows that the covariance matrix of assets can be written as:\\
\begin{equation}
    \bold{\Sigma}_{HF} = \bold{B}'\bold{B} + 0.15 \times \bold{I}_{k\times k}
\end{equation}
Figure \ref{fig:Sim_Oracle_Correlation} plots on the left the so-called true or \textit{oracle} correlation matrix for the chosen hidden factor set-up and on the right the oracle precision matrix, i.e. the inverse of the covariance matrix. The correlation matrix clearly resembles the resulting block correlation structure stemming from the underlying hidden factor exposures, with assets in the same group having a high correlation and zero among assets which do not belong to the same group. Furthermore, also for the oracle precision matrix the hidden factor structure is preserved. As all of the studied methods directly estimate the precision matrix, we investigate how well the respective procedures are able to reproduce the structure of the oracle precision matrix.\\

\begin{figure}[ht]
    \centering
    
    % --- First Subfigure ---
    \begin{subfigure}[b]{0.45\textwidth}
        \centering
        % Replace with your filename for nu=3
        \includegraphics[width=\linewidth]{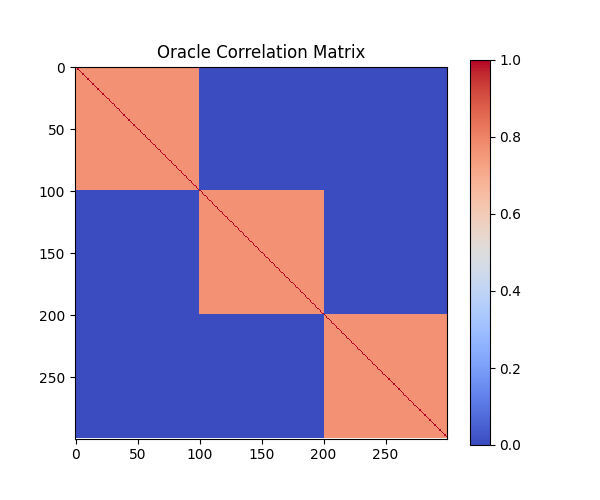}
        %\caption{$\nu = 5$}
        %\label{fig:sim_oracle_precision}
    \end{subfigure}
    \hfill % Adds flexible space between images
    % --- Second Subfigure ---
    \begin{subfigure}[b]{0.45\textwidth}
        \centering
        \includegraphics[width=\linewidth]{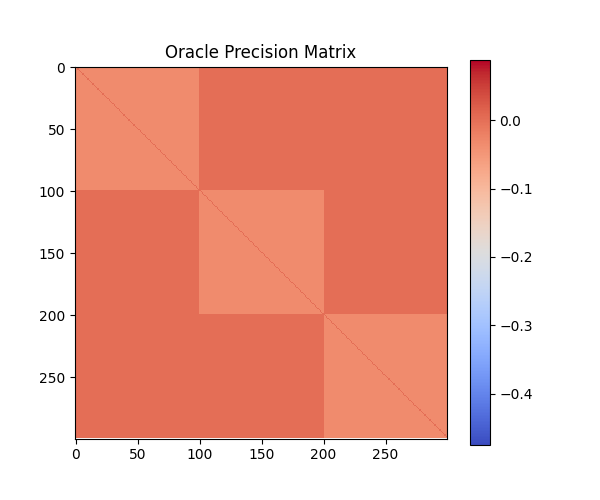}
        %\caption{$\nu = 10$}
        %\label{fig:sim_glasso}
    \end{subfigure}
    
    \hfill % Adds flexible space between images
    \caption{The figure shows the oracle correlation matrix and the oracle precision matrix for the hidden factor set-up, when choosing $t=1000$, $k=300$, $r=6$ and each group has an exposure to exactly two distinct risk factors, to which the other groups do not have an exposure.}
    \label{fig:Sim_Oracle_Correlation}

\end{figure}

While GLasso and TLasso are build on a sequence of one repeated lambda parameter, GSlope and TSlope require to define a decreasing sequence of tuning parameters, i.e. $\boldsymbol{\lambda} = (\lambda_1, ..., \lambda_k)$. For the latter two, we follow \cite{Riccobello2025} and choose the sequence of lambda parameters according to Benjamini-Hochberg sequence for which the $j$-th tuning parameter is given as:\\

\begin{equation}\label{lambda_bh_slope}
    \lambda_{j}^{BH} = \frac{t_{\nu-2} ( 1-\frac{\alpha \times j}{2m} )}{\sqrt{\nu-2+t^{2}_{\nu-2} (1-\frac{\alpha \times j}{2m})}},\ \ \forall j=1, ..., k.
\end{equation}
 
As discussed in \cite{Sobczyk2019}, for $\alpha = 0.1$ this sequence targets a False Discovery Rate of $10\%$ for block-diagonal matrices. Since our aim here is to illustrate the potential of the different methods in terms of estimation error, rather than FDR control, we rescale the $\lambda$ sequences using the factor
\[
\gamma = \frac{\rho}{\bar{\boldsymbol{\lambda}}},
\]
where $\rho$ is a grid of 20 equally spaced values between 0 and 2.5, and $\bar{\boldsymbol{\lambda}}$ denotes the average of the $\lambda$ values obtained from (\ref{lambda_bh_slope}). We then choose the $\lambda$ sequence that minimizes the Frobenius norm distance between the oracle precision matrix and its estimate. The same procedure is used to select the tuning parameters for GLasso and TLasso, taking
$
\lambda = \gamma \lambda_1,
$
where $\lambda_1$ is the first element of the sequence in (\ref{lambda_bh_slope}).

In what follows, we compare the optimal performance of GLasso, TLasso, GSlope, and TSlope by simulating $m=100$ return time series from (\ref{HF_Time_Series}) and reporting the estimation error of these procedures for the corresponding optimal value of $\lambda$.
%estimating the precision matrix using the GLasso, TLasso, GSlope and TSlope procedures, for the set of 20 lambda values (sequences). For each of these 20 precision matrices we then choose as the optimal solution the result which has the minimum Frobenius norm distance between the oracle and the estimated precision matrix. This process is repeated $m=100$ times.

\begin{figure}[ht]
    \centering
    
    % --- First Subfigure ---
    \begin{subfigure}[b]{0.32\textwidth}
        \centering
        % Replace with your filename for nu=3
        \includegraphics[width=\linewidth]{Figures/Sim_Oracle_Precision.png}
        %\caption{$\nu = 5$}
        %\label{fig:sim_oracle_precision}
    \end{subfigure}
    \hfill % Adds flexible space between images
    % --- Second Subfigure ---
    \begin{subfigure}[b]{0.32\textwidth}
        \centering
        \includegraphics[width=\linewidth]{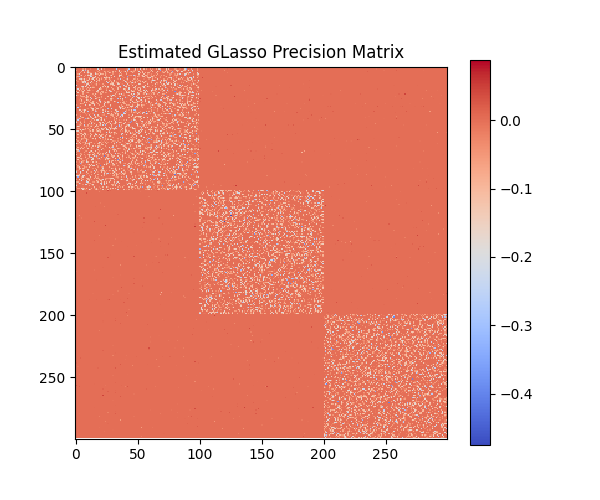}
        %\caption{$\nu = 10$}
        %\label{fig:sim_glasso}
    \end{subfigure}
    \hfill % Adds flexible space between images
    % --- Third Subfigure ---
    \begin{subfigure}[b]{0.32\textwidth}
        \centering
        % Replace with your filename for nu=10
        \includegraphics[width=\linewidth]{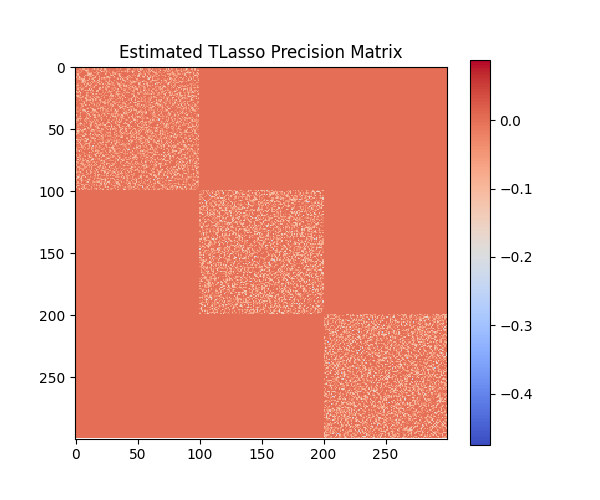}
        %\caption{$\nu = 1000$}
        %\label{fig:sim_tlasso}
    \end{subfigure}

      % --- Row 2 ---
    \begin{subfigure}{0.32\textwidth}
        \centering
        \includegraphics[width=\linewidth]{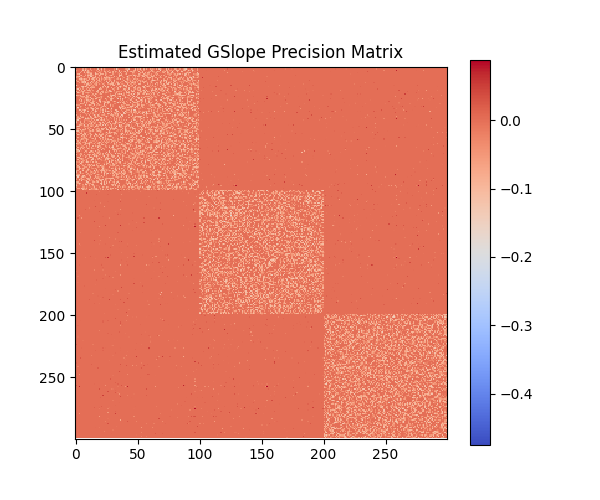}
        %\caption{Figure 4}
    \end{subfigure}
    \hfill
    \begin{subfigure}{0.32\textwidth}
        \centering
        \includegraphics[width=\linewidth]{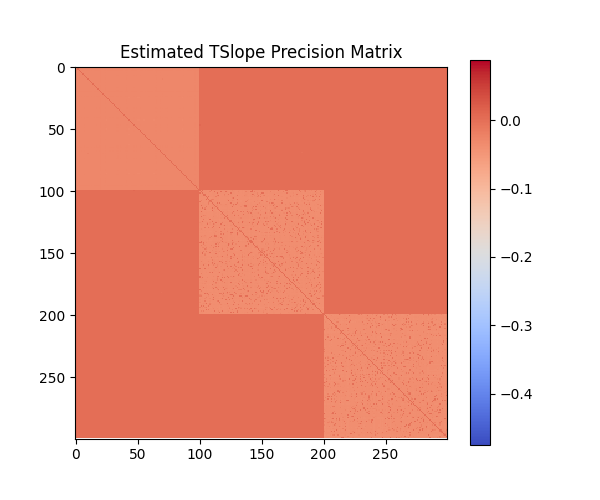}
        %\caption{Figure 5}
    \end{subfigure}
    \hfill
    \begin{subfigure}{0.32\textwidth}
        \centering
        \includegraphics[width=\linewidth]{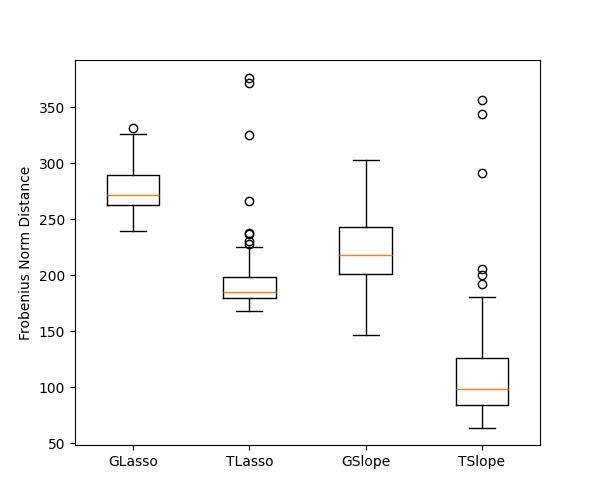}
        %\caption{Figure 6}
    \end{subfigure}
    
    \caption{The Figure shows from top left to bottom right: the oracle precision matrix, the estimated precision matrices of the GLasso, TLasso, GSlope and TSlope method, as well as the boxplots of the minimum Frobenius norm distances wrt. the orcale precision matrix across the $m=100$ simulation runs for the four methods, respectively. The matrices were selected based on the last simulation run and choosing the matrices across the grid of tuning parameters with the minimum Frobenius norm distance. Simulations have been performed by choosing $t=1000$, $k=300$ and $r=6$, and assuming that each group has an exposure to exactly two distinct risk factors to which the other groups do not have an exposure.}
    \label{fig:HIDDEN_FACTOR_RESULTS}
\end{figure}
Figure \ref{fig:HIDDEN_FACTOR_RESULTS} reports next to the estimated precision matrices for the GLasso, TLasso, GSlope and TSlope procedures, again the oracle precision matrix, as well as the boxplots of the minimum Frobenius norm distances for the respective procedures across the $m=100$ simulation runs. The reported matrices have been selected based on the last simulation run and choosing the matrix with the minimum Frobenius norm distance wrt. the oracle precision matrix, considering the grid of tuning parameters for the respective method.\\
The figure shows that all methods are able to resemble the underlying grouping structure. Still, looking closer at the results, we can see that TSlope performs best in extracting not only the dependence of assets across groups, but also the strength of dependence between assets in the same group and when comparing the estimated matrix with the oracle precision matrix. Furthermore, while for the other methods it appears that they capture the intergroup dependence among assets well, the dependence among assets within the same group can deviates significantly in magnitude from the oracle precision matrix. This deviation appears to be especially prone for the GLasso procedure. The superiority of the TSlope method is also confirmed by the boxplots from Figure \ref{fig:HIDDEN_FACTOR_RESULTS}, in which TSlope shows the lowest median Frobenius norm distance across the $m=100$ simulation runs and across all methods. Furthermore, we can observe that GLasso shows the worst median across all methods.
\newpage

\section{Empirical Evidence from Portfolios Sorted by Size and Book-to-Market}\label{sec:empirical}

%Intro and motivation
We complement the theoretical and simulation results presented in the previous sections with an empirical application aimed at assessing the clustering properties of the proposed estimators in a real-data setting. While the previous sections analyze the asymptotic behavior and pattern recovery properties under controlled data-generating processes, empirical data imply a more challenging exercise, as the underlying clustering structure is unknown and cannot be directly observed. To mitigate this difficulty, we exploit the presence of economically meaningful characteristics that induce a natural ordering among the variables. In particular, the empirical application considered in this section relies on data structured along two important dimensions: firm size and book-to-market ratio. These two characteristics provide a plausible proxy for latent similarity across variables. This setting allows us to investigate clustering behavior in the estimated precision matrices even in the absence of an observable ground-truth clustering.

%Brief data description
Specifically, we employ the data provided by Kenneth R. French's data library, and focus on the ``25 Portfolios Formed on Size and Book-to-Market'' dataset, which is widely used in the financial literature (see, e.g. \cite{Riccobello2025}) and provides a natural setting for investigating structured dependence patterns. This dataset consists of 25 portfolios, which are constructed at the end of each June as the intersections of five portfolios sorted on firm size (market equity, ME) and five portfolios sorted on the book-to-market ratio (BE/ME). Size breakpoints for year $t$ are defined using NYSE market equity quintiles at the end of June of year $t$, while BE/ME breakpoints are based on NYSE quintiles, where BE/ME for June of year $t$ is computed as book equity for the last fiscal year ending in $t-1$ divided by market equity in December of $t-1$. The portfolios include all NYSE, AMEX, and NASDAQ stocks for which market equity data are available for December of $t-1$ and June of $t$, together with positive book equity data for year $t-1$. Further details on the construction of the portfolios and data availability can be found in the Kenneth R. French Data Library (\url{https://mba.tuck.dartmouth.edu/pages/faculty/ken.french/}).

%Brief explorative analysis
In our empirical analysis, we use daily value-weighted returns, expressed in percentage points (i.e., multiplied by 100). Our sample spans the period from January 3, 2000 to October 31, 2025, for a total of 6,498 trading days. We reorder the 25 portfolios according to the BE/ME dimension. Therefore, portfolios sharing similar BE/ME characteristics are placed closer to each other, highlighting potential block structures in the dependence matrix. The resulting ordering follows the natural progression from low to high BE/ME portfolios, while preserving the ME classification within each group.

As a first descriptive overview of the data, Figure \ref{fig:boxplot_portfolios} reports boxplots of the daily returns across the 25 portfolios. The figure highlights substantial heterogeneity across portfolios. In particular, in terms of volatility, the BIG.HiBM portfolio exhibits the largest dispersion, with a standard deviation equal to 1.801, whereas ME5.BM2 displays the lowest volatility, with a standard deviation of 1.187. Moreover, all portfolios show the presence of pronounced extreme observations, both negative and positive, which is reflected in high kurtosis values. Specifically, kurtosis ranges from a minimum of 6.695 for ME2.BM1 to a maximum of 17.935 for ME5.BM4. Finally, in terms of average returns, the distributions appear to be substantially centered around zero.

\begin{figure}[ht]
    \centering
    \includegraphics[width=\textwidth]{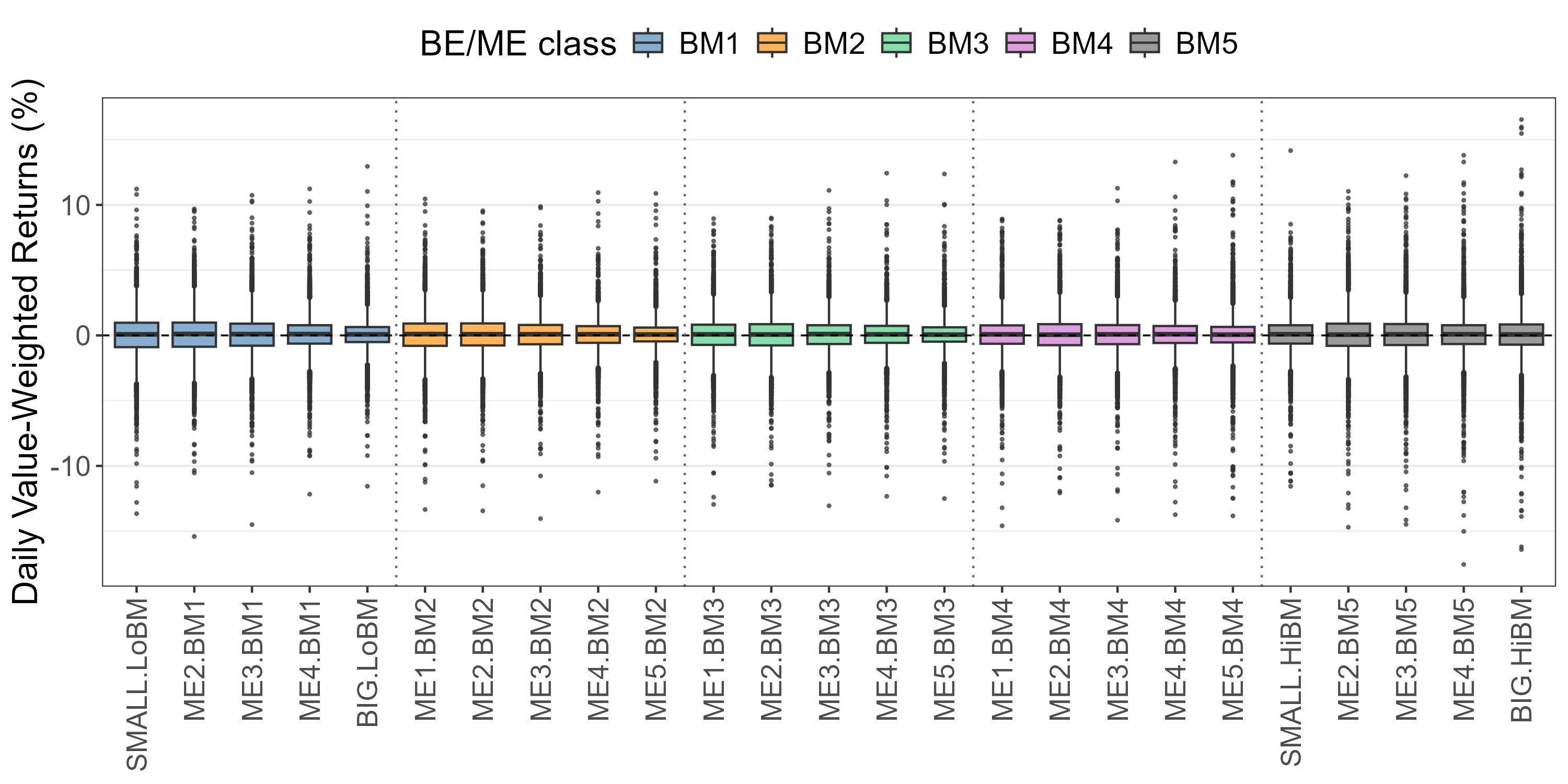}
    \caption{Boxplots of daily value-weighted returns for the 25 portfolios sorted by size and book-to-market (BE/ME) ratio computed from January 3, 2000 to October 31, 2025.}
    \label{fig:boxplot_portfolios}
\end{figure}

To further investigate the structure of the data, we analyze the time dynamics of portfolio returns. Figure \ref{fig:trend_portfolios} reports the evolution over time of the average daily returns for each book-to-market (BE/ME) class. Several stylized facts emerge from the figure. First, the series display clear evidence of volatility clustering \cite{Cont2001}, with relatively tranquil periods alternating with episodes of pronounced turbulence. This pattern is consistent with well-known features of financial time series and suggests the presence of regime-like dynamics in return variability. Periods of high volatility are clearly associated with well-known tail events, including the burst of the dot-com bubble in the early 2000s, the global financial crisis of 2007--2009, and the COVID-19 pandemic. In the final part of the sample, increased fluctuations due to trade-policy tensions and tariff-related uncertainty are also observed. Overall, the figure highlights strong common dynamics across BE/ME classes, while still allowing for heterogeneous behavior across groups, thereby motivating the subsequent analysis of dependence and clustering patterns in the estimated precision matrices.

\begin{figure}[ht]
    \centering
    \includegraphics[width=\textwidth]{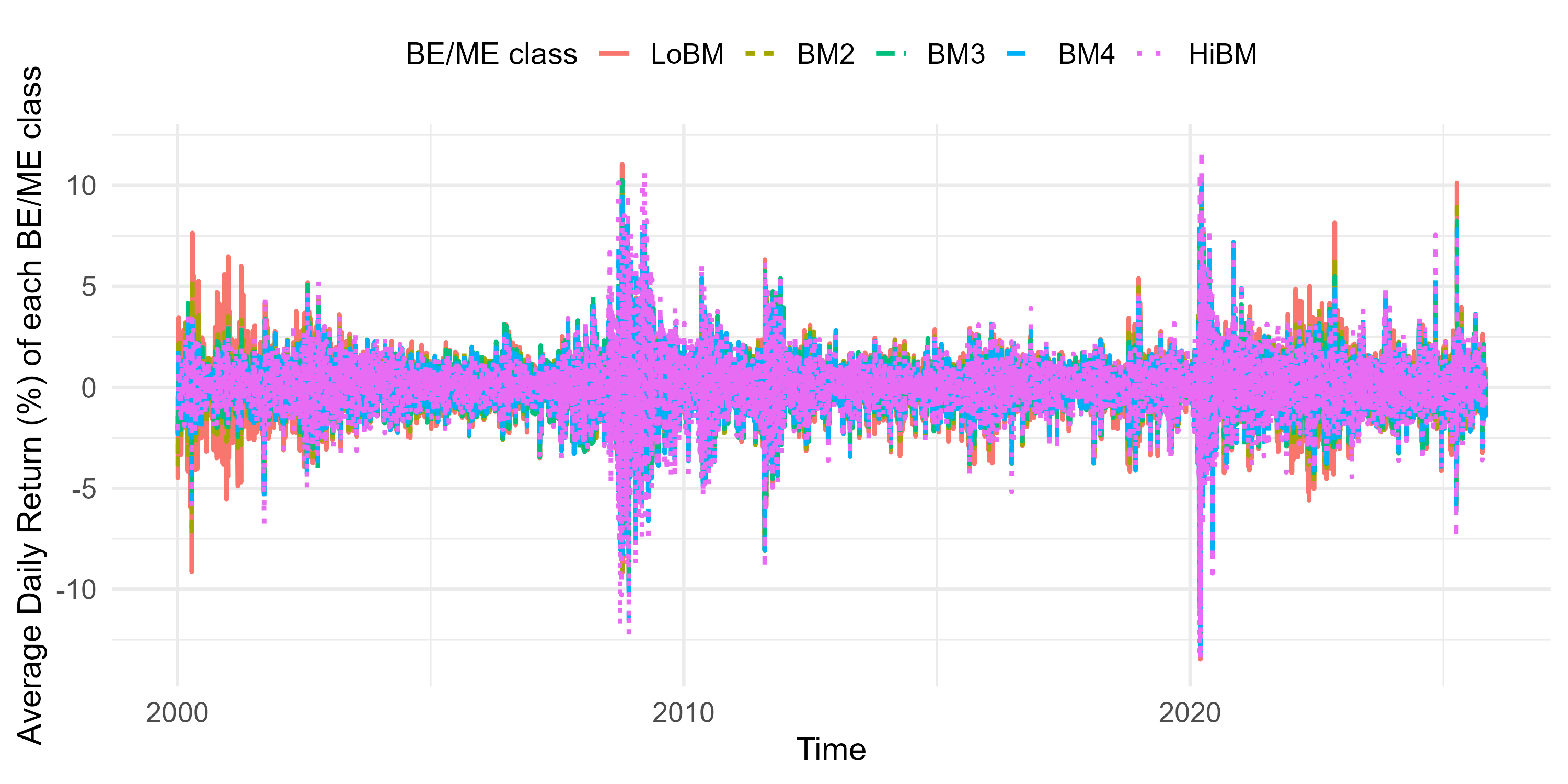}
    \caption{Time evolution of average daily value-weighted returns from January 3, 2000 to October 31, 2025; for each BE/ME class, the plotted series corresponds to the cross-sectional average of the five size-based portfolios.}
    \label{fig:trend_portfolios}
\end{figure}

After describing the main features of the data, we now turn to the empirical strategy adopted to investigate clustering patterns in the estimated precision matrices. In line with the methodology developed in the previous sections, clustering is performed directly on the off-diagonal entries of the precision matrix. While in the simulation study this approach allows a direct comparison with the true underlying parameters, in the empirical application the true clustering structure is not observable, and therefore clustering quality must be assessed using internal validation criteria.

To this end, we estimate GSLOPE and TSLOPE separately for each year in the sample period, from 2000 to 2025, obtaining a total of 26 precision matrix estimates for each model and each corresponding tuning-parameter sequence. Estimation on yearly subsamples allows us to capture potential time variation in dependence structures and provides multiple realizations that increase the stability of the clustering analysis. Before estimation, returns are standardized within each year. This step ensures comparability across subsamples and mitigates the effect of the pronounced volatility regimes highlighted in Figure \ref{fig:trend_portfolios}, where periods of relative stability alternate with episodes of elevated market turbulence. Standardization then prevents differences in scale from mechanically affecting the estimated dependence structure.

The penalization framework follows the one introduced in the simulation study. In particular, the overall level of regularization is controlled by a scalar parameter $\alpha$ multiplying a BH-type sequence. Differently from the simulation setting, where the BH($q$)-normalized sequence is based on Gaussian quantiles, in the empirical analysis we adopt a finite-sample version constructed from Student-$t$ quantiles. Let $p$ denote the number of variables and $m = p(p-1)/2$ the number of strictly upper-triangular entries of the precision matrix. In our empirical application, we have $p=25$ (i.e., the number of portfolios), which yields $m=300$ distinct off-diagonal entries. For $k=1,\ldots,m$, we then define:
$$
t_k = t_{n-2}^{-1} \left(1-\frac{q\, k}{2m}\right),
$$
and
$$
\lambda_k = \frac{t_k}{\sqrt{(n-2)+t_k^2}},
$$
where $t_{n-2}^{-1}(\cdot)$ denotes the quantile function of the Student-$t$ distribution with $n-2$ degrees of freedom; this construction yields a decreasing BH-type sequence of tuning parameters.\footnote{Throughout the empirical analysis we set $q=0.05$, in line with the conventional BH level used for two-sided testing.}

As in the simulation analysis, the global penalty strength is governed by the scalar parameter $\alpha$, which is explored over a logarithmically spaced grid from $10^{-4}$ to $4$, yielding 200 points along the regularization path. This wide range of $\alpha$ values allows us to investigate the behavior of the estimated precision matrices across different levels of regularization, from weak to strong shrinkage, and to assess the stability of the clustering structure across sparsity regimes.

For each model and for each tuning-parameter sequence, we retain the strictly upper-triangular part of the estimated precision matrix, excluding the main diagonal. Since $p=25$, this yields $m=300$ distinct edge coefficients for each yearly estimation. After vectoring these estimates across the 26 yearly subsamples, we construct a $300 \times 26$ matrix, where each row corresponds to a specific edge $(i,j)$, with $i<j$, and each column represents a yearly realization. Hence, each edge is characterized by a vector describing its temporal evolution over the sample period. Clustering is then performed by applying $k$-means to this matrix, grouping together edges that exhibit similar temporal patterns. We implement the $k$-means clustering method for different numbers of clusters and, for each of them, assess the clustering quality using the Calinski--Harabasz index \cite{Calinski1974}.\footnote{Preliminary analyses showed that performing clustering on edge vectors constructed across the 26 yearly estimations (i.e., using $300 \times 26$ matrices) leads to substantially more stable Calinski-Harabasz values along the regularization path compared with clustering based on a single cross-sectional estimate (i.e., a $300 \times 1$ vectors). This motivates the use of the temporal dimension to enhance the robustness of the clustering assessment.} This criterion balances within-cluster cohesion and between-cluster separation, with higher values indicating a more pronounced and well-separated clustering structure. Therefore, for each empirical setup, we compare the clustering performance of the proposed GSLOPE and TSLOPE estimators with the GLASSO benchmark. 

We stress the fact that GSLOPE and TSLOPE rely on a sequence of tuning parameters $\{\lambda_k\}$, whereas GLASSO is controlled by a single scalar regularization parameter. To ensure a meaningful comparison across methods, the GLASSO penalty level is calibrated using the average magnitude of the SLOPE tuning sequence. Specifically, we set the GLASSO tuning parameter equal to $\left(3.5 \; \alpha \,\overline{\lambda}\right)$, where $\overline{\lambda}$ denotes the average value of the tuning-parameter sequence defined for the SLOPE penalty function. The multiplicative factor $3.5$ is empirically chosen in order to align the overall magnitude of the Calinski-Harabasz indicator across methods, thereby allowing a more informative comparison of clustering patterns along the regularization path.

\begin{figure}[ht]
    \centering
    \includegraphics[width=\textwidth]{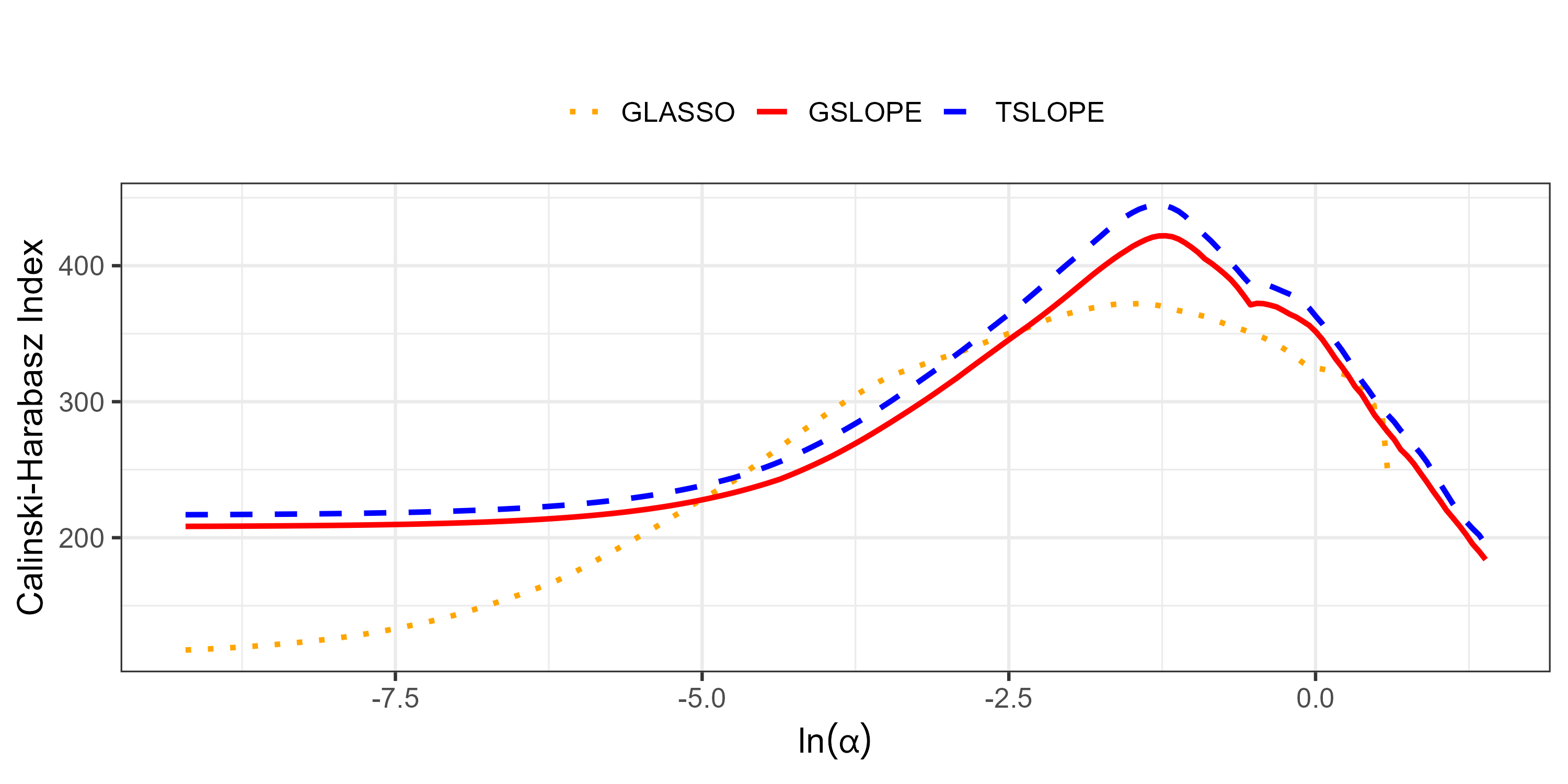}
    \caption{Calinski-Harabasz index as a function of $\ln(\alpha)$ for GSLOPE, TSLOPE, and GLASSO, reported for the case of three clusters.}
    \label{fig:CalinskiHarabasz3Clust}
\end{figure}

Figure \ref{fig:CalinskiHarabasz3Clust} reports the Calinski-Harabasz index as a function of $\ln(\alpha)$ for GSLOPE, TSLOPE, and GLASSO in the case of three clusters. Similar patterns are obtained by implementing the $k$-means method with different numbers of clusters.\footnote{For brevity, these additional results are not reported here but are available from the authors upon request.} We can see from Figure \ref{fig:CalinskiHarabasz3Clust} that TSLOPE systematically achieves the highest values of the Calinski--Harabasz index across the entire grid of $\alpha$ values, indicating a more pronounced and stable clustering structure. GSLOPE follows closely, while GLASSO displays substantially lower values throughout the regularization path. This ordering suggests that the structured penalization induced by SLOPE-based estimators leads to more clearly separated clusters compared with the standard sparsity-based benchmark. In particular, the Calinski--Harabasz index reaches its overall maximum under TSLOPE, with a value of 444.775 attained at $\alpha = 0.279$ (corresponding to $\ln(\alpha) = -1.277$ in Figure \ref{fig:CalinskiHarabasz3Clust}), highlighting the strongest clustering structure along the regularization path.

To assess whether the clustering results obtained from the Calinski-Harabasz analysis have an economically meaningful interpretation, we focus on the best-performing configuration while maintaining the analysis fixed at three clusters. Specifically, we consider the TSLOPE model estimated setting $\alpha = 0.279$, where the Calinski-Harabasz index reaches its maximum. Based on this specification, we extract the cluster allocation of the off-diagonal entries of the 26 precision matrices estimated for each year from 2000 to 2025, and visualize the resulting structure in Figure \ref{fig:PrecisionMatrHeatmap3Clust_Tslope_B2MSorted}. Figure \ref{fig:PrecisionMatrHeatmap3Clust_Tslope_B2MSorted} displays the cluster assignment of each entry $(i,j)$, with $i,j=1,\ldots,25$ and $i<j$, represented within a $25 \times 25$ matrix whose ordering follows the size and book-to-market structure described above. The three clusters are identified with three different colors: white (cluster 0), green (cluster 1) and orange (cluster 2). The label ``cluster 0'' is adopted purely for visualization purposes; as discussed below, this cluster predominantly corresponds to zero-valued entries across the underlying 26 precision matrices, representing the absence of conditional dependence between portfolio pairs. Clusters 1 and 2, in contrast, capture non-zero dependence patterns, highlighting heterogeneous interaction structures across portfolios. The representation given in Figure \ref{fig:PrecisionMatrHeatmap3Clust_Tslope_B2MSorted} provides an intuitive way to assess whether the statistically identified clusters exhibit economically interpretable patterns. In particular, it allows us to visually inspect the presence of localized dependence patterns that may be related to the economic characteristics underlying the portfolio construction.

\begin{figure}[ht]
    \centering
    \includegraphics[width=\textwidth]{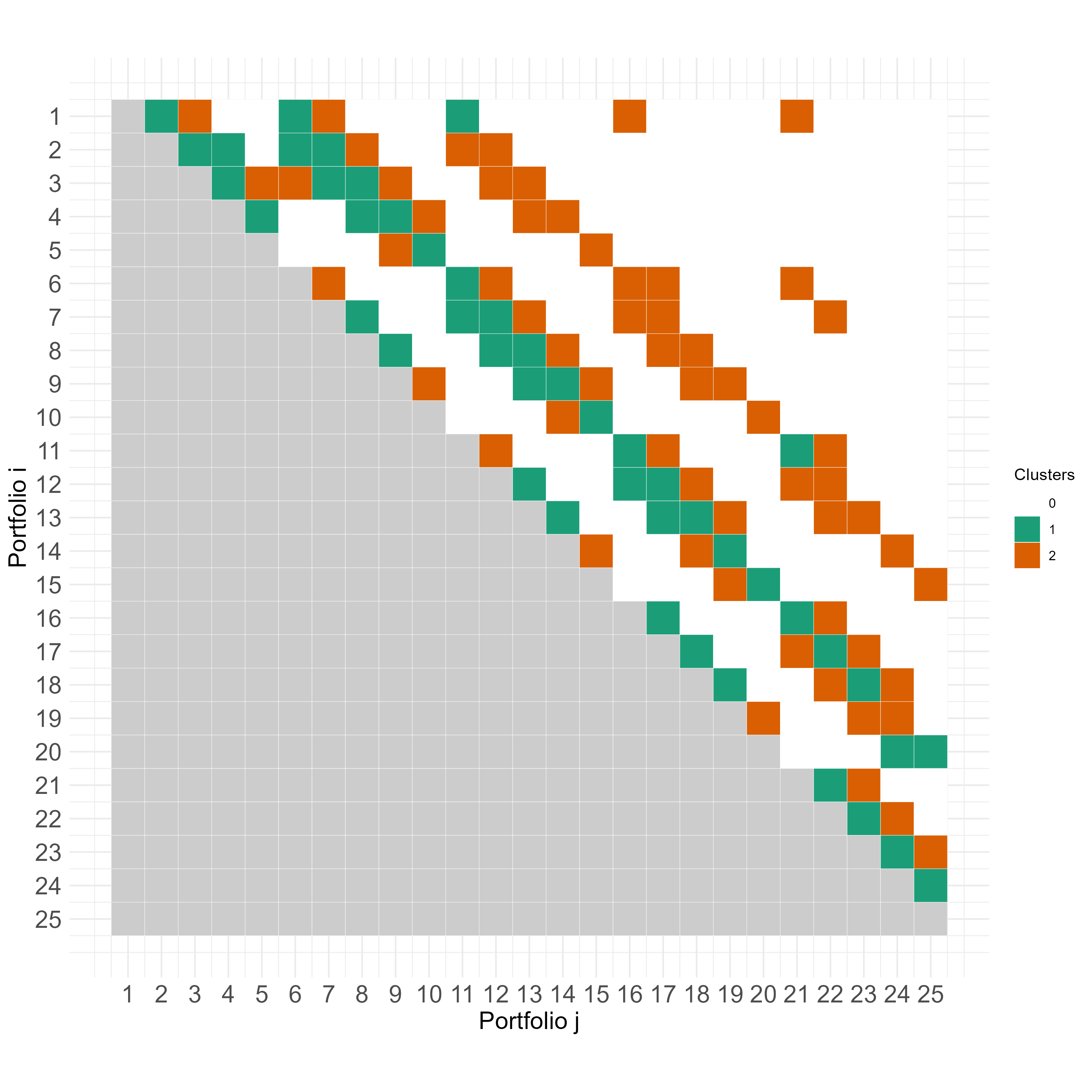}
    \caption{Heatmap of cluster assignments for the off-diagonal entries of the TSLOPE estimated precision matrices corresponding to the value of $\alpha$ that maximizes the Calinski-Harabasz index for the case of three clusters.}
    \label{fig:PrecisionMatrHeatmap3Clust_Tslope_B2MSorted}
\end{figure}

Figure \ref{fig:PrecisionMatrHeatmap3Clust_Tslope_B2MSorted} reveals a clear spatial organization of the identified clusters. Cluster 0, represented in white, is predominantly located farther away from the main diagonal. As anticipated, this cluster mainly corresponds to zero-valued entries of the underlying 26 precision matrices, indicating the absence of conditional dependence between portfolios that are relatively distant in terms of their economic characteristics. In contrast, cluster 1 tends to concentrate closer to the main diagonal. As will be shown below, this cluster is associated with the largest entries, in absolute value, of the estimated precision matrices, corresponding to stronger conditional dependence relationships. Given the ordering of portfolios according to size and book-to-market characteristics, this pattern suggests that stronger dependencies arise primarily among portfolios sharing similar economic features. In other words, proximity in the ordering dimension appears to be associated with higher similarity in dependence structure. Overall, Figure \ref{fig:PrecisionMatrHeatmap3Clust_Tslope_B2MSorted} highlights a localized dependence pattern, where economically similar portfolios exhibit stronger interactions, while more distant portfolios tend to be conditionally independent.

\begin{figure}[ht]
    \centering
    \includegraphics[width=\textwidth]{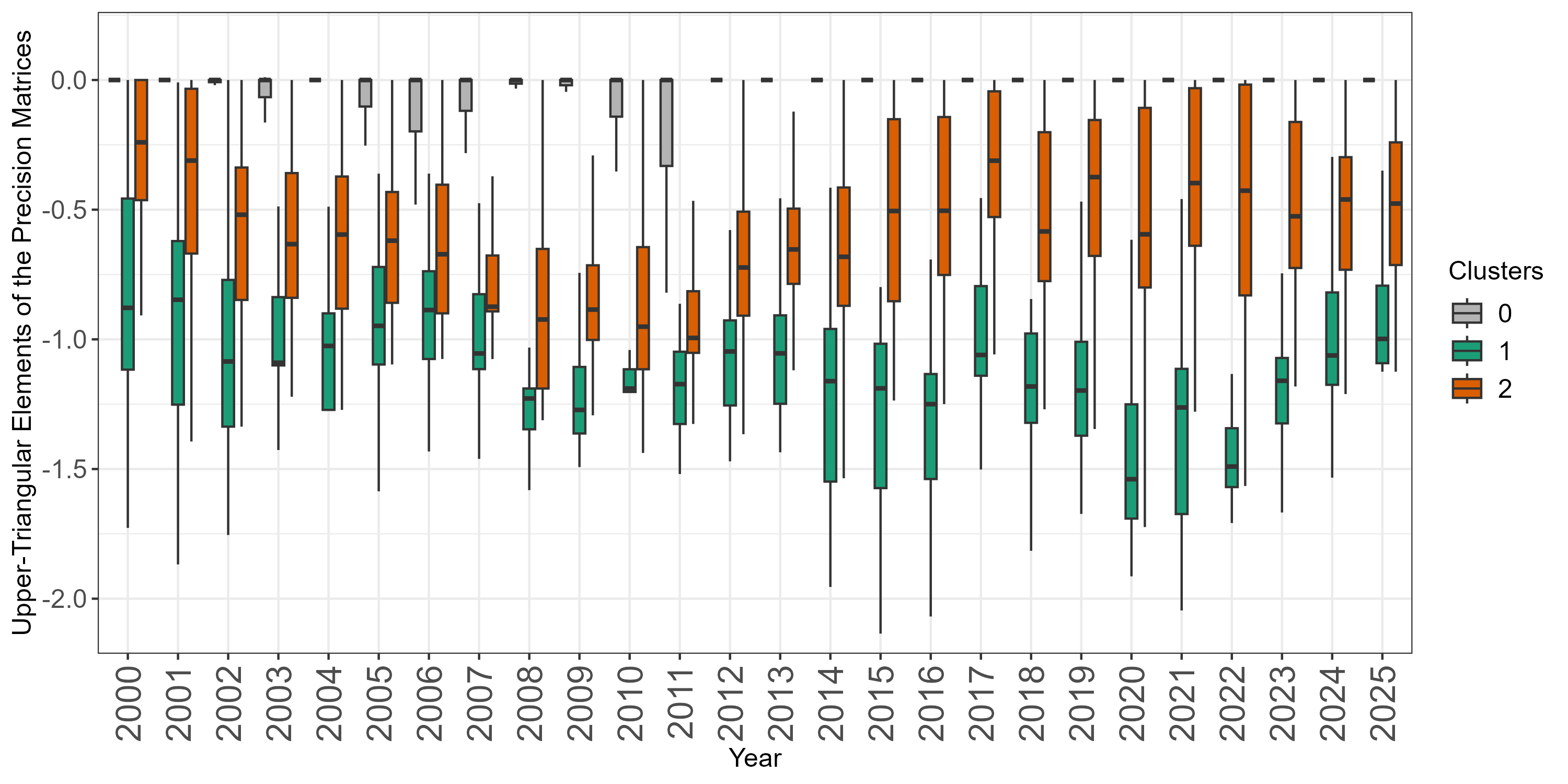}
    \caption{Boxplots of the off-diagonal entries (upper-triangular part) of the yearly precision matrices estimated with TSLOPE ($\alpha = 0.279$), reported for each year from 2000 to 2025 and grouped according to the cluster assignments obtained from the $k$-means procedure (with the number of clusters equal to three); outliers are omitted for visual clarity.}
    \label{fig:PrecisionMatrBoxplots3Clust_Tslope_B2MSorted}
\end{figure}

To further support the interpretation suggested by Figure \ref{fig:PrecisionMatrHeatmap3Clust_Tslope_B2MSorted}, Figure \ref{fig:PrecisionMatrBoxplots3Clust_Tslope_B2MSorted} reports boxplots of the off-diagonal entries (upper-triangular part) of the 26 yearly precision matrices estimated using TSLOPE with $\alpha = 0.279$, corresponding to the best-performing configuration identified in Figure \ref{fig:CalinskiHarabasz3Clust}. For visual clarity, outliers are omitted from the boxplots in Figure \ref{fig:PrecisionMatrBoxplots3Clust_Tslope_B2MSorted} in order to facilitate the comparison of the central distributional patterns across clusters and years. The boxplots are shown separately for each year from 2000 to 2025 and grouped according to the cluster assignment obtained from the clustering analysis. A clear pattern emerges from the figure. Across all years, the strongest relationships in absolute value are systematically associated with cluster 1, confirming that this cluster captures the most pronounced conditional dependence structure. As discussed above, these links mainly involve portfolios that are closer in terms of size and book-to-market characteristics. Interestingly, the strongest relationships are observed during the COVID-19 pandemic (year 2020). Cluster 2 displays intermediate magnitudes, while cluster 0 remains largely concentrated around zero, consistently with its interpretation as representing weak or absent conditional dependence.

\begin{figure}[ht]
    \centering
    \includegraphics[width=\textwidth]{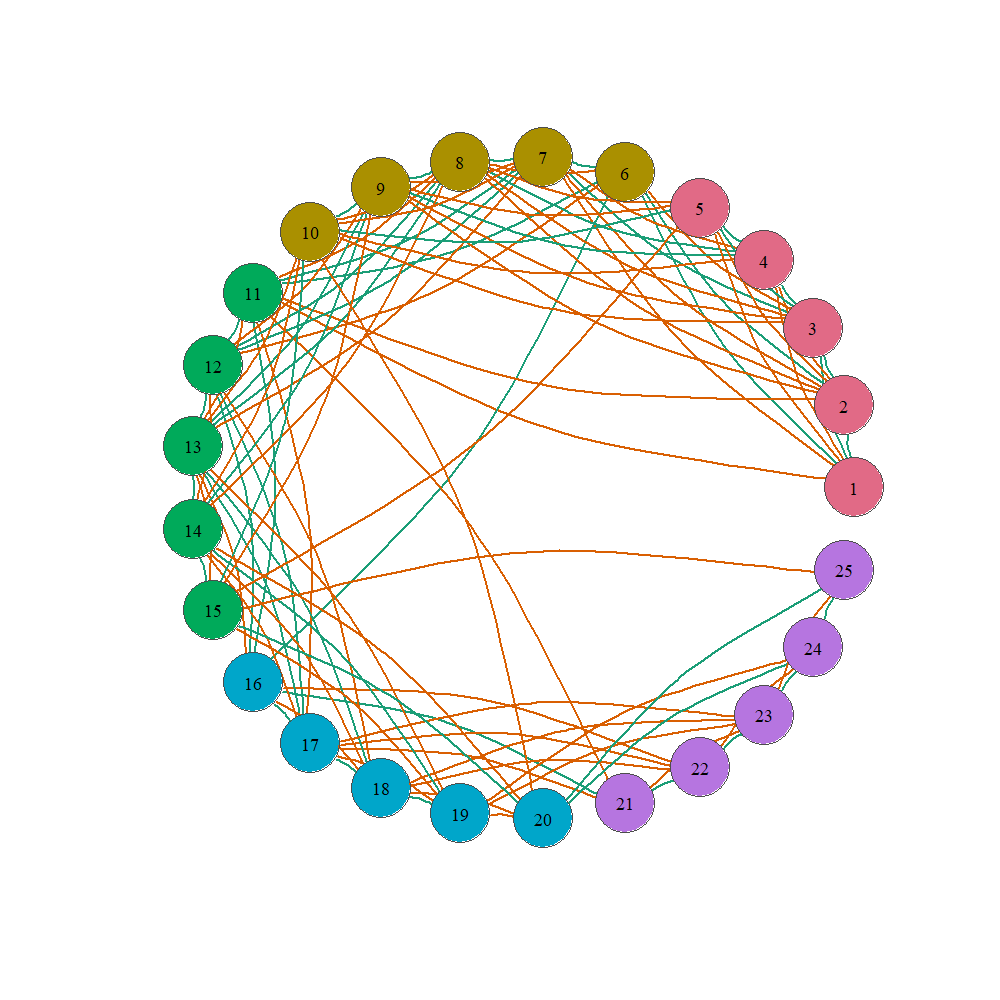}
    \caption{Network representation of the 25 portfolios based on the TSLOPE estimation at $\alpha = 0.279$ (number of clusters equal to three); nodes are colored according to their BE/ME class, and link colors correspond to the cluster assignments identified in Figure \ref{fig:PrecisionMatrHeatmap3Clust_Tslope_B2MSorted}.}
    \label{fig:Network3Clust_Tslope_B2MSorted}
\end{figure}

As a final evidence supporting the previous findings, Figure \ref{fig:Network3Clust_Tslope_B2MSorted} provides a network representation of the 25 portfolios based on the clustering structure obtained from the TSLOPE specification with $\alpha = 0.279$. Nodes are arranged on a circular layout and colored according to their BE/ME class, allowing a direct comparison between the economic grouping of portfolios and the dependence structure implied by the estimated precision matrix. Link colors follow the same cluster coding used in Figure \ref{fig:PrecisionMatrHeatmap3Clust_Tslope_B2MSorted}: green links correspond to cluster 1, associated with stronger conditional dependence, while orange links correspond to cluster 2, capturing intermediate dependence levels. Links belonging to cluster 0 are displayed in white, reflecting near-zero relationships. The resulting network visualization highlights a clear structural pattern. The central area appears relatively sparse, whereas stronger connections tend to concentrate along the periphery of the network, linking portfolios that are closer in terms of their economic characteristics. This evidence is consistent with the previous results and further supports the presence of localized dependence structures among economically similar portfolios.

Overall, the empirical analysis presented in this section provides consistent evidence in favor of the proposed SLOPE-based estimators, and in particular TSLOPE, in capturing meaningful clustering structures in the precision matrix. The combination of internal validation criteria, heatmap visualization, distributional analysis, and network representation highlights a clear pattern of localized dependence, where stronger conditional relationships emerge primarily among portfolios sharing similar economic characteristics. These findings suggest that the structured penalization induced by SLOPE not only improves statistical clustering performance but also reveals economically interpretable dependence structures in real financial data.

\section{Conclusion}\label{sec:conclusion}

This paper develops an asymptotic theory for Graphical SLOPE in precision matrix estimation, with particular emphasis on recovering structured dependence patterns beyond entrywise sparsity. In a fixed-$p$ asymptotic framework, we derive the limiting distribution of the rescaled estimation error, $\sqrt{n}(\widehat{\Theta}_n-\Theta_0)$, and of the asymptotic clustering pattern, for GSLOPE and TSLOPE and when the data are generated from multivariate elliptical distributions. A comparison of the asymptotic distributions highlights the advantage of gSLOPE over gLASSO when the precision matrix contains many equal entries. It also shows the advantage of TSLOPE over GSLOPE when the data are heavy-tailed. These results clarify the interaction between loss specification, tail behavior, and structured regularization, and show that asymptotic performance depends not only on sparsity but also on the extent to which the loss is adapted to the data-generating distribution.

These qualitative findings are further supported by simulation results in a high-dimensional setting and by an empirical application to portfolios sorted by size and book-to-market. In that setting, SLOPE-based estimators, especially TSLOPE, reveal stable and economically interpretable clustered dependence structures that are less apparent under standard sparsity-based methods. This suggests that structured penalties may be useful not only for improving statistical efficiency, but also for enhancing the interpretability of estimated financial networks.

Overall, the paper provides a theoretical and empirical foundation for using Graphical SLOPE and TSLOPE as tools for sparse and clustered precision matrix estimation. Several directions remain open for future research. A natural next step is to extend the present low-dimensional asymptotic analysis to high-dimensional regimes in which both $p$ and $n$ grow jointly. It would also be of interest to study analogous pattern-recovery questions for partial-correlation-based formulations and for other structured penalties in graphical models. More broadly, our results suggest that clustering-aware asymptotic theory may provide a useful framework for understanding how modern regularization methods recover interpretable dependence structures in multivariate data.

\section*{Acknowledgments}
IH, MB and JW acknowledge the support of the Swedish Research Council, grant no. 2020-05081. Sandra Paterlini would like to thank the EU COST Action CA21163 (HiTEc) ‘‘Text, functional and other high-dimensional data in econometrics: New models, methods, applications’’.

\newpage
\bibliographystyle{plain}
\bibliography{citation}

\newpage
\section{Appendix}\label{sec:appendix}
\subsection{Pattern, Pattern Space and Clustering Error of SLOPE}
\begin{example}\label{example: pattern and pattern space}
We illustrate the notion of a SLOPE pattern, the pattern space on the following example. Consider a precision matrix $\Theta_0$ and its lower-triangular vectorization $\vec_+(\Theta_0)$:
\begin{align*}
\Theta_0 &=
\left(\begin{matrix}
1 & 0.6 & 0.6 & 0.4 \\
\bl{0.6} & 1 & 0.6 & -0.4 \\
\bl{0.6} & \bl{0.6} & 1 & 0 \\
\re{0.4} & \re{-0.4} & \gr{0} & 1
\end{matrix}\right),
\quad
\vec_+(\Theta_0)=(\bl{0.6}, \bl{0.6}, \re{0.4}, \bl{0.6}, \re{-0.4}, \gr{0})
\end{align*}
The SLOPE pattern is given by
\begin{equation*}
    \mathbf{patt}(\vec_+(\Theta_0))=(\bl{2}, \bl{2}, \re{1}, \bl{2}, \re{-1}, \gr{0}),
\end{equation*}
and the pattern space $V_0:=\mathrm{span}\{\theta\in\mathbb{R}^{p(p-1)/2}: \mathbf{patt}(\theta)=\mathbf{patt}(\vec_+(\Theta_0))\}$ by
\begin{align*}
    V_0 = \{(\bl{a},\bl{a},\re{b},\bl{a},\re{-b},\gr{0}): a,b\in\mathbb{R}\}
\end{align*}

\end{example}\label{example: clustering error}
\begin{example}
    Consider $\Theta_0$ from the previous example. Suppose the rescaled error $\widehat{U}_n = \sqrt{n}(\widehat{\Theta}_n-\Theta_0)$ is
    \begin{align*}
\widehat{\Theta}_n =
\left(\begin{matrix}
1.1 & 0.65 & 0.68 & 0.37 \\
\bl{0.65} & 0.95 & 0.68 & -0.37 \\
\bl{0.68} & \bl{0.68} & 1.05 & 0.04 \\
\re{0.37} & \re{-0.37} & \gr{0.04} & 0.9
\end{matrix}\right)
\quad
\quad
\widehat{U}_n =
\begin{pmatrix}
1 & 0.5 & 0.8 & -0.3 \\
\bl{0.5} & -0.5 & 0.8 & 0.3 \\
\bl{0.8} & \bl{0.8} & 0.5 & 0.4 \\
\re{-0.3} & \re{0.3} & \gr{0.4} & -1
\end{pmatrix}
    \end{align*}
The lower-triangular vectorization $\hat{u}_n= \vec(\widehat{U}_n)$ decomposes into
\begin{align*}
    \hat{u}_n= (\bl{0.5},\bl{0.8}, \re{-0.3}, \bl{0.8}, \re{0.3}, \gr{0.4})=\underbrace{(\bl{0.7},\bl{0.7}, \re{-0.3}, \bl{0.7}, \re{0.3}, \gr{0})}_{P\hat{u}_n\in V_0}+\underbrace{(\bl{-0.2},\bl{0.1}, \re{0}, \bl{0.1}, \re{0}, \gr{0.4})}_{(I-P)\hat{u}_n\in V_0^{\perp}},
\end{align*}
where $P$ denotes the projection matrix onto $V_0$. The total error is 
\begin{equation*}
    \Vert \hat{u}_n\Vert_2^2 = \bl{0.5}^2 + \bl{0.8}^2 + (\re{-0.3})^2 + \bl{0.8}^2 + \re{0.3}^2 + \gr{0.4}^2 = 1.87,
\end{equation*}
of which the clustering error is:
\begin{equation*}
    \Vert (I-P)\hat{u}_n\Vert_2^2 = (\bl{-0.2})^2 + \bl{0.1}^2 + \bl{0.1}^2+ \gr{0.4}^2 = 0.22.
\end{equation*}
In this example, $0.22/1.87=11.8\%$ of the total estimation error are due to clustering error, the rest $88.2\%$ are due to the bias error.

\end{example}

\subsection{ Directional Derivative of Graphical SLOPE}\label{appendix: directional derivative}
For the graphical SLOPE norm with $\lambda_1>\dots>\lambda_{p(p-1)/2}>0$, 
\begin{equation*}
    \operatorname{Pen}(\Theta) = \sum_{j=1}^{p(p-1)/2} \lambda_j |\vec_+(\Theta)|_{(j)},
\end{equation*}

we find the explicit expression for the directional SLOPE derivative 
\begin{equation*}
\operatorname{Pen}'(\Theta_0;U)=\lim_{\varepsilon\to 0}\frac{1}{\varepsilon}(\operatorname{Pen}(\Theta_0+\varepsilon U)-\operatorname{Pen}(\Theta_0)).
\end{equation*}
Let $\theta^0=\vec_+(\Theta_0)$ and $u=\vec_+(U)$ be the strictly subdiagonal vectorizations in $\mathbb{R}^{p(p-1)/2}$. The directional derivative is
\begin{align*}
\operatorname{Pen}'(\Theta_0; U)&=\sum\limits_{j=1}^{p(p-1)/2}\lambda_{\pi(j)}\left[u_{j} \mathrm{sgn}(\theta^0_{j})\mathbb{I}[\theta^0_{j}\neq0]+\vert u_{j}\vert\mathbb{I}[\theta^0_{j}=0]\right],
\end{align*}
where $\pi$ is a permutation of size $p(p-1)/2$, which sorts $\theta^0+\varepsilon u$ by its absolute magnitudes;
\begin{align*}
    |\theta^0+\varepsilon u|_{\pi^{-1}(1)}\geq\dots \geq|\theta^0+\varepsilon u|_{\pi^{-1}(p(p-1)/2)}
\end{align*}
as $\varepsilon\searrow 0$.

%\subsection{Proofs}
\subsection{Proofs for the Gaussian Loss}\label{appendix: proofs for Gaussian Loss}

\begin{proof}[Proof of Theorem~\ref{theorem convergence in distribution}] 
    The statement follows from Theorem~\ref{main conditions theorem}. 
    Conditions (i)--(v) have been verified in Theorem 3 of \cite{bogdan2025identifying}. For completeness, we explicitly verify them here.
    
    First, observe that the loss function
    \begin{equation*}
        \ell(X,\Theta) = -\frac{1}{2}\log\det(\Theta) + \frac{1}{2}\operatorname{tr}(\Theta XX^T)
    \end{equation*}
    is a smooth map on the parameter space $\operatorname{Sym}_+(p)$ for every fixed $X\in\mathbb{R}^p$. The derivatives are given by
    \begin{align*}
        \nabla_\Theta \ell(X, \Theta) &= \frac{1}{2}(-\Theta^{-1} + XX^T),\\
        \nabla^2_\Theta \ell(X, \Theta) &= \frac{1}{2}(\Theta^{-1} \otimes \Theta^{-1}).
    \end{align*}
    Furthermore, the risk function takes the explicit form
    \begin{equation*}
        G(\Theta)=\mathbb{E}[\ell(X,\Theta)] = -\frac{1}{2}\log\det(\Theta) + \frac{1}{2}\operatorname{tr}(\Theta\Sigma),
    \end{equation*}
    where $\Sigma=\mathbb{E}[XX^T]$. 
    We fix an open neighborhood $B$ of $\Theta_0=\Sigma^{-1}$ in $\operatorname{Sym}_+(p)$ such that there exist constants $r_+, r_->0$ ensuring that all matrices in $B$ have eigenvalues uniformly bounded:
    \begin{equation}\label{compact neighborhood}
        r_- \leq \inf_{\Theta\in B} \lambda_{\min}(\Theta)\leq \sup_{\Theta\in B}\lambda_{\max}(\Theta)\leq r_+.
    \end{equation}
    This implies that the closure of $B$ is a compact subset of $\operatorname{Sym}_+(p)$.
    We now verify conditions (i)--(v) from Theorem~\ref{main conditions theorem}:

    \textbf{Condition (i)}: Note that the Hessian $\nabla^2_\Theta \ell(X, \Theta) = \frac{1}{2}(\Theta^{-1}\otimes \Theta^{-1})$ does not depend on $X$. The continuity of the inverse map and the compactness of the closure of $B$ imply that $\sup_{\Theta\in B}\lVert\nabla^2_\Theta \ell(X, \Theta)\rVert \leq M$ for some constant $M>0$.
    
    \textbf{Condition (ii)}: The function $G(\Theta)$ is smooth (and thus $C^3$) on $B$. The Hessian $C=\frac{1}{2}(\Theta_0^{-1}\otimes \Theta_0^{-1})$ is positive definite because $\Theta_0^{-1}=\Sigma$ is positive definite.

    \textbf{Condition (iii)}: This is satisfied because the expected gradient vanishes at the truth:
    \begin{equation*}
        \mathbb{E}[\nabla_\Theta \ell(X, \Theta_0)] = \frac{1}{2}(-\Theta_0^{-1}+\mathbb{E}[XX^T]) = \frac{1}{2}(-\Sigma + \Sigma) = 0.
    \end{equation*}
    Furthermore, $C_{\triangle}=\operatorname{Cov}(\vec(XX^T))/4$ is finite due to the finiteness of the fourth moment $\mathbb{E}[\lVert X\rVert^4]<\infty$.

    \textbf{Condition (iv)}: To establish uniform tightness, observe that the unpenalized minimization in \eqref{main objective} yields the MLE, $\widehat{\Theta}_n^{\text{MLE}}=(\frac{1}{n}\sum_{i=1}^nX^{(i)}X^{(i)^T})^{-1}$, which converges to the true precision $\Theta_0=\Sigma^{-1}$ by the Law of Large Numbers. The penalization term $n^{-1/2}\operatorname{Pen}(\Theta)$ is of order $O(n^{-1/2})$ and becomes negligible asymptotically. Consequently, the penalized estimator satisfies $\widehat{\Theta}_n\xrightarrow{p}\Theta_0$ as $n\to\infty$. The sequence is consistent, hence also uniformly tight.

    \textbf{Condition (v)}: A uniform envelope bound is obtained by applying the Cauchy--Schwarz inequality:
    \begin{equation*}
        \operatorname{tr}(\Theta X X^T) \le \lVert\Theta\rVert_F \,\lVert X X^T\rVert_F.
    \end{equation*}
    Using the continuity of $\log\det(\Theta)$ and the Frobenius norm $\lVert\Theta\rVert_F$, combined with the compactness of the closure of $B$, there exist constants $c_1, c_2$ such that $\sup_{\Theta\in B}|\ell(X,\Theta)|\leq c_1 + c_2\lVert XX^T\rVert_F$. This envelope is integrable due to the finiteness of the second moments.

    Thus, the loss satisfies all regularity conditions required in Corollary~3.4 of \cite{hejny2025asymptotic}, and Theorem~\ref{theorem convergence in distribution} follows.
\end{proof}

For the next proposition and subsequent results, we introduce the following notation:
\begin{notation}
For any matrices $M_1, M_2 \in \mathbb{R}^{p^2\times p^2}$, we write 
$$M_1\equiv M_2,$$ 
whenever $M_1 \vec(A) = M_2\vec(A)$ for every $A\in\operatorname{Sym}(p)$. In other words, $M_1\equiv M_2$ if the linear maps coincide when restricted to the subspace of vectorized symmetric matrices.
\end{notation}

\begin{proposition} 
Let $u$ be a random vector uniformly distributed on the unit sphere $\mathbb{S}^{p-1}$. Then
\begin{align}\label{spherical moments}
\mathbb{E}[uu^T\otimes uu^T]\equiv\frac{1}{p(p+2)}\left(2I_{p^2}+\vec(I)\vec(I)^T\right),
\end{align}
in the sense of the notation defined above.
\end{proposition}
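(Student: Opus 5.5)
We compute the fourth-moment tensor of $u$ entrywise and then check how the resulting matrix acts on vectorized symmetric matrices. With the convention $\vec(A)_{(j-1)p+i}=A_{ij}$, the $((i,k),(j,l))$ entry of $uu^T\otimes uu^T$ is $u_iu_ju_ku_l$, up to the fixed index-ordering convention of the Kronecker product. Which positions the indices occupy does not matter here, because the expectation turns out to be a symmetric function of $(i,j,k,l)$. So the first step is to establish
\begin{equation*}
\mathbb{E}[u_iu_ju_ku_l]=\frac{1}{p(p+2)}\left(\delta_{ij}\delta_{kl}+\delta_{ik}\delta_{jl}+\delta_{il}\delta_{jk}\right).
\end{equation*}

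To prove this we use the Gaussian representation $Z=\lVert Z\rVert u$ with $Z\sim\mathcal{N}_p(0,I_p)$, where $\lVert Z\rVert$ is independent of $u$. This gives
\begin{equation*}
\mathbb{E}[Z_iZ_jZ_kZ_l]=\mathbb{E}[\lVert Z\rVert^4]\,\mathbb{E}[u_iu_ju_ku_l].
\end{equation*}
Isserlis' theorem gives the left side as $\delta_{ij}\delta_{kl}+\delta_{ik}\delta_{jl}+\delta_{il}\delta_{jk}$. For the constant, $\mathbb{E}[\lVert Z\rVert^4]=\mathbb{E}[(\chi^2_p)^2]=p(p+2)$. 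Dividing yields the claimed formula. (As a check, it gives $\mathbb{E}[u_1^4]=3/(p(p+2))$ and $\mathbb{E}[u_1^2u_2^2]=1/(p(p+2))$.)

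Next we translate the three delta terms into matrices. Each term is tested on $\vec(A)$ with $A$ symmetric.
\begin{itemize}
\item The term $\delta_{ij}\delta_{kl}$ pairs $i$ with $j$ and $k$ with $l$. It corresponds to the matrix $\vec(I)\vec(I)^T$, which sends $\vec(A)$ to $\operatorname{tr}(A)\vec(I)$.
\item Of the other two terms, one corresponds to the identity $I_{p^2}$.
\item The remaining one corresponds to the commutation matrix $K_{p,p}$, defined by $K_{p,p}\vec(A)=\vec(A^T)$.
\end{itemize}
Thus, as operators,
\begin{equation*}
\mathbb{E}[uu^T\otimes uu^T]=\frac{1}{p(p+2)}\left(I_{p^2}+K_{p,p}+\vec(I)\vec(I)^T\right).
\end{equation*}
For symmetric $A$ we have $K_{p,p}\vec(A)=\vec(A)$, so $I_{p^2}+K_{p,p}\equiv 2I_{p^2}$ in the sense of the notation. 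This gives \eqref{spherical moments}.

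A cleaner way to organise the last step is to verify directly that
\begin{equation*}
(uu^T\otimes uu^T)\vec(A)=\vec(uu^TAuu^T)=(u^TAu)\vec(uu^T),
\end{equation*}
using $(B\otimes C)\vec(A)=\vec(CAB^T)$. Taking expectations, the $(k,l)$ entry is $\sum_{i,j}A_{ij}\mathbb{E}[u_iu_ju_ku_l]$. By the moment formula this equals $(p(p+2))^{-1}(\operatorname{tr}(A)\delta_{kl}+A_{kl}+A_{lk})$. Using $A=A^T$, that is $(p(p+2))^{-1}(\operatorname{tr}(A)\delta_{kl}+2A_{kl})$, which is exactly the action of the right side of \eqref{spherical moments}. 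This route avoids any index-ordering convention.

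The only real obstacle is the moment computation; everything else is bookkeeping. The independence of $\lVert Z\rVert$ and $Z/\lVert Z\rVert$ is the standard fact that makes that computation a one-liner.
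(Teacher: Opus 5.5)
Your proof is correct and is essentially the paper's argument: write $u=Z/\lVert Z\rVert$ with $Z\sim\mathcal{N}_p(0,I_p)$, use the independence of $u$ and the radius, apply Isserlis' theorem to get $I_{p^2}+K_p+\vec(I)\vec(I)^T$, divide by $\mathbb{E}[\lVert Z\rVert^4]=p(p+2)$, and use $K_p\vec(A)=\vec(A)$ for symmetric $A$. Your extra check via $(uu^T\otimes uu^T)\vec(A)=(u^TAu)\vec(uu^T)$ is a clean, convention-free way to finish, and it also sidesteps a harmless labelling slip in your bullet list: under your stated $((i,k),(j,l))$ indexing it is $\delta_{ik}\delta_{jl}$, not $\delta_{ij}\delta_{kl}$, that produces $\vec(I)\vec(I)^T$, which changes nothing because the three terms are summed.
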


\begin{proof} 
We begin with the general formula for the fourth moments of the spherical distribution:
\begin{align*}
\mathbb{E}[uu^T\otimes uu^T] =\frac{1}{p(p+2)}(I_{p^2}+K_p+\vec(I)\vec(I)^T),
\end{align*}
where $K_{p}$ is the commutation matrix characterized by $K_{p}\vec(A)=\vec(A^T)$ for every matrix $A$. This identity can be derived by expressing $u=Z/\lVert Z\rVert$, where $Z \sim \mathcal{N}_p(0, I_p)$. Since $u$ is independent of the radius $\lVert Z\rVert$, we have
\begin{equation*}
    \mathbb{E}[uu^T\otimes uu^T] = \frac{\mathbb{E}[ZZ^T\otimes ZZ^T]}{\mathbb{E}[\lVert Z\rVert^4]}.
\end{equation*}
Using Isserlis' theorem, the numerator expands to terms involving Kronecker deltas, which corresponds to $I_{p^2} + K_p + \vec(I)\vec(I)^T$ in matrix notation. The denominator is given by $\mathbb{E}[\lVert Z\rVert^4]=p(p+2)$ (see, e.g., Subsection 2.6.2 in \cite{anderson1958introduction}).

Finally, observe that for any symmetric matrix $A \in \operatorname{Sym}(p)$, $K_p \vec(A)=\vec(A^T)=\vec(A)$. Thus, $K_p \equiv I_{p^2}$ on the subspace of symmetric matrices, and the result follows.
\end{proof}

\begin{proof}[Proof of Proposition~\ref{proposition for elliptical distributions}]
    It remains to establish the representation for the covariance in \eqref{covariance for elliptical distributions}. From Theorem~\ref{theorem convergence in distribution}, we have
    \begin{align*}
        C_{\triangle}&= \frac{1}{4}\Cov(\vec(XX^T))\\
        &=\frac{1}{4}\left(\mathbb{E}\left[XX^T\otimes XX^T\right] - \vec(\Theta_0^{-1})\vec(\Theta_0^{-1})^T \right)\\
        &= \frac{1}{4}(\Theta_0^{-1/2}\otimes\Theta_0^{-1/2})\left(\mathbb{E}[R^4]\mathbb{E}\left[uu^T\otimes uu^T\right] - \vec(I)\vec(I)^T \right) (\Theta_0^{-1/2}\otimes \Theta_0^{-1/2}).
    \end{align*}
    For the special case of the Gaussian distribution, where $R_0^2\sim\chi_p^2$, we have the fourth moment $\mathbb{E}[R_0^4]=p(p+2)$, and the identity $C=C_{\triangle}$.
    We can express the general covariance by adding and subtracting $C$ expressed as $C_{\triangle}$ above with $R=R_0$. Note that the constant term involving $\vec(I)\vec(I)^T$ cancels out in the difference:
    \begin{align*}
        C_{\triangle} &= C + (C_{\triangle}-C)\\
        & = C + \frac{1}{4}\left(\mathbb{E}[R^4]-p(p+2)\right) (\Theta_0^{-1/2}\otimes\Theta_0^{-1/2})\left(\mathbb{E}\left[uu^T\otimes uu^T\right] \right) (\Theta_0^{-1/2}\otimes \Theta_0^{-1/2})\\
        & \equiv C + \frac{1}{4}\left(\dfrac{\mathbb{E}[R^4]}{p(p+2)}-1\right) (\Theta_0^{-1/2}\otimes\Theta_0^{-1/2})\left(2I_{p^2}+\vec(I)\vec(I)^T\right) (\Theta_0^{-1/2}\otimes \Theta_0^{-1/2})\\
        & = C + \left(\dfrac{\mathbb{E}[R^4]}{p(p+2)}-1\right) \left(C + \frac{1}{4} \vec(\Theta_0^{-1})\vec(\Theta_0^{-1})^T\right),
    \end{align*}
    where we have used the identity \eqref{spherical moments} and the definition of $C$ in the final steps.
\end{proof}

\subsection{Proofs for the t-Loss}\label{appendix: proofs for t-Loss}

\begin{proof}[Proof of Lemma~\ref{lemma t-hessian,covariance}]
To derive the explicit expressions for the Hessian $C$ and the covariance $C_{\triangle}$, we consider the derivatives of the loss \eqref{t-distribution loss}:
\begin{align*}
\nabla_{\Theta}\ell(x, \Theta) &= - \dfrac{1}{2}\Theta^{-1}+\dfrac{\nu+p}{2}\dfrac{1}{\nu+x^{T}\Theta x}xx^T,\\
\nabla_{\Theta}^2\ell(x, \Theta) &= \dfrac{1}{2} (\Theta^{-1}\otimes\Theta^{-1}) - \dfrac{\nu+p}{2}\dfrac{1}{(\nu+x^{T}\Theta x)^2}\vec(xx^T)\vec(xx^T)^T.
\end{align*}

\textbf{Step 1:} (Derivation of the Hessian $C$).
From the representation $X=\Theta_0^{-1/2}uR$, we obtain:
\begin{equation*}
    X^T\Theta_0X=R^2 \hspace{2cm}\text{and}\hspace{2cm} \frac{XX^T}{X^T\Theta_0X}=\Theta_0^{-1/2}uu^T\Theta_0^{-1/2}.
\end{equation*}
Substituting these into the Hessian formula and taking expectations yields
\begin{align*}
    \mathbb{E}[\nabla_{\Theta}^2\ell(X, \Theta_0)] 
    &= \dfrac{1}{2} (\Theta_0^{-1}\otimes\Theta_0^{-1}) - \mathbb{E}\left[\frac{\nu+p}{2}\dfrac{(X^{T}\Theta_0 X)^2}{(\nu+X^{T}\Theta_0 X)^2}\left(\dfrac{\vec(XX^T)}{X^T\Theta_0 X}\vec\left(\dfrac{XX^T}{X^T\Theta_0 X}\right)^T\right)\right]\\
    & = \dfrac{1}{2} (\Theta_0^{-1/2} \otimes \Theta_0^{-1/2})\left(I\otimes I - \mathbb{E}[\xi_R]\cdot \mathbb{E}[uu^T\otimes uu^T]\right) (\Theta_0^{-1/2} \otimes \Theta_0^{-1/2}),
\end{align*}
where $\xi_{R}=(\nu+p)R^4/(\nu+R^2)^2$. Using identity~\eqref{spherical moments} for $\mathbb{E}[uu^T\otimes uu^T]$ and simplifying leads directly to the expression for $C$ in \eqref{t-hessian}.

\textbf{Step 2:} (Derivation of the covariance $C_{\triangle}$).
First, we verify the consistency condition. From $X=\Theta_0^{-1/2}uR$, we have:
\begin{equation*}
    \mathbb{E}\left[\frac{XX^T}{\nu+X^T\Theta_0X}\right] = \mathbb{E}\left[\frac{R^2}{\nu+R^2}\Theta_0^{-1/2}uu^T\Theta_0^{-1/2}\right]=\mathbb{E}\left[\frac{R^2}{p(\nu+R^2)}\right]\Theta_0^{-1}.
\end{equation*}
Therefore, the first-order condition holds:
\begin{equation}\label{assumption verification}
    \mathbb{E}[\nabla_{\Theta}\ell(X, \Theta_0)]=0 \iff \mathbb{E}\left[\frac{(\nu+p)R^2}{p(\nu+R^2)}\right]=1,
\end{equation}
which matches the assumption \eqref{key assumption on R for t-distribution}.
We now compute the covariance of the score vector:
\begin{align*}
    C_{\triangle} &:= \operatorname{Cov}(\vec(\nabla_{\Theta}\ell(X, \Theta_0)))\\
      &= \operatorname{Cov}\left(\dfrac{\nu+p}{2}\dfrac{X^{T}\Theta_0 X}{\nu+X^{T}\Theta_0 X} \vec\left(\dfrac{XX^T}{X^{T}\Theta_0 X}\right)\right)\\
      &= \frac{1}{4}\operatorname{Cov}\left(\dfrac{(\nu+p)R^2}{\nu+R^2}\vec\left(\Theta_0^{-1/2}uu^{T}\Theta_0^{-1/2}\right)\right).
\end{align*}
Using the definition $\operatorname{Cov}(Y) = \mathbb{E}[YY^T] - \mathbb{E}[Y]\mathbb{E}[Y]^T$ and the fact that $\mathbb{E}[uu^T]=p^{-1}I_p$, we get
\begin{align*}
      C_{\triangle} &= \frac{1}{4}\left(\mathbb{E}\left[\dfrac{(\nu+p)^2R^4}{(\nu+R^2)^2}\right]\mathbb{E}\left[\vec(\Theta_0^{-1/2}uu^{T}\Theta_0^{-1/2})\vec(\Theta_0^{-1/2}uu^{T}\Theta_0^{-1/2})^T\right] - \vec(\Theta_0^{-1})\vec(\Theta_0^{-1})^T \right)\\
      &= \frac{1}{4}\left(\Theta_0^{-1/2} \otimes \Theta_0^{-1/2}\right)\left[ (\nu+p)\mathbb{E}[\xi_R]\mathbb{E}\left[uu^T\otimes uu^T\right]-\vec(I)\vec(I)^T\right] \left(\Theta_0^{-1/2} \otimes \Theta_0^{-1/2}\right).
\end{align*}
Using \eqref{spherical moments} again, we can relate this to $C$. The difference $C_{\triangle}-C$ simplifies to
\begin{align*}
    \left(\Theta_0^{-1/2} \otimes \Theta_0^{-1/2}\right) \left(\left(\frac{\nu+p+2}{p(p+2)}\mathbb{E}[\xi_R]-1\right)\left(\frac{1}{2}I_{p^2}+\frac{1}{4}\vec(I)\vec(I)^T\right)\right)\left(\Theta_0^{-1/2} \otimes \Theta_0^{-1/2}\right),
\end{align*}
which matches the second term in \eqref{t-covariance}.

\textbf{Step 3:} (Positive definiteness of $C$).
To verify that $C$ is positive definite, we write $C=(\Sigma^{1/2}\otimes \Sigma^{1/2}) Q (\Sigma^{1/2}\otimes \Sigma^{1/2})$, with
\begin{equation*}
    Q:=\frac{1}{2}I_{p^2} - \frac{\mathbb{E}[\xi_R]}{p(p+2)}\left( I_{p^2} + \frac{1}{2}vv^T \right),
\end{equation*}
where $v=\vec(I_p)$. It suffices to show $Q \succ 0$.
Note that
\begin{align}\label{bounds on xi_R}
    0<\mathbb{E}[\xi_R]=\mathbb{E}\left[\frac{R^2}{\nu+R^2}\cdot \frac{(\nu+p)R^2}{\nu+R^2}\right]<\mathbb{E}\left[ \frac{(\nu+p)R^2}{\nu+R^2}\right]=p.
\end{align}
Any vector $w \in \mathbb{R}^{p^2}$ orthogonal to $v$ is an eigenvector of $Q$ with eigenvalue
\begin{equation*}
    \lambda_1 = \frac{1}{2}-\frac{\mathbb{E}[\xi_R]}{p(p+2)} > \frac{1}{2}-\frac{p}{p(p+2)} = \frac{1}{2} - \frac{1}{p+2} > 0.
\end{equation*}
The vector $v$ itself is an eigenvector with eigenvalue
\begin{equation*}
    \lambda_2 = \frac{1}{2}-\frac{\mathbb{E}[\xi_R]}{p(p+2)}\left(1+\frac{\lVert v\rVert^2}{2}\right) = \frac{1}{2}-\frac{\mathbb{E}[\xi_R]}{p(p+2)}\left(1+\frac{p}{2}\right) = \frac{1}{2}-\frac{\mathbb{E}[\xi_R]}{2p}.
\end{equation*}
Using the bound $\mathbb{E}[\xi_R] < p$, we see that $\lambda_2 > \frac{1}{2} - \frac{1}{2} = 0$.
Thus, all eigenvalues of $Q$ are positive, implying $C$ is positive definite.
\end{proof}

\begin{proof} [Proof of Theorem~\ref{theorem convergence in distribution for t distribution}]
To establish asymptotic convergence, we verify the conditions in Theorem~\ref{main conditions theorem}. The risk function is given by
\begin{equation*}
    G(\Theta)=\mathbb{E}[\ell(X,\Theta)]=-\frac{1}{2}\log\det(\Theta)+\frac{\nu + p}{2} \mathbb{E}[\log\big(\nu + X^\top \Theta X\big)].
\end{equation*}
Note that $G(\Theta)$ is minimized at $\Theta_0$. The first-order condition $\nabla_\Theta G(\Theta_0)=0$ holds due to the assumption \eqref{key assumption on R for t-distribution} on $R$ and the verification in \eqref{assumption verification}. 

We fix an open neighborhood $B$ around $\Theta_0$ in $\operatorname{Sym}_+(p)$, such that there exist positive constants $r_+, r_->0$ ensuring that all matrices in $B$ have eigenvalues uniformly bounded:
\begin{equation*}
    r_- \leq \inf_{\Theta\in B} \lambda_{\min}(\Theta)\leq \sup_{\Theta\in B}\lambda_{\max}(\Theta)\leq r_+.
\end{equation*}
This implies that the closure of $B$ is a compact subset of $\operatorname{Sym}_+(p)$. We now verify conditions (i)--(v) of Theorem~\ref{main conditions theorem}.

\textbf{Condition (i):} Let $\preceq$ denote the Loewner partial order on symmetric matrices, where $A_1\preceq A_2$ if $A_2-A_1$ is positive semi-definite. Recall the Hessian derived in Lemma~\ref{lemma t-hessian,covariance}:
\begin{equation*}
    \nabla_{\Theta}^2\ell(X, \Theta) = \dfrac{1}{2} (\Theta^{-1}\otimes\Theta^{-1}) - \dfrac{\nu+p}{2}\dfrac{1}{(\nu+X^{T}\Theta X)^2}\vec(XX^T)\vec(XX^T)^T.
\end{equation*}
The second term is the subtraction of a positive semi-definite matrix. Therefore,
\begin{equation*}
    \nabla^2_\Theta \ell(X, \Theta) \preceq \frac{1}{2}(\Theta^{-1}\otimes\Theta^{-1}).
\end{equation*}
Using the operator norm (largest absolute eigenvalue), we have
\begin{equation*}
    \lVert\nabla^2_\Theta \ell(X, \Theta)\rVert \leq \frac{1}{2}\lVert\Theta^{-1}\otimes\Theta^{-1}\rVert = \frac{1}{2}\lVert\Theta^{-1}\rVert^2 = \frac{1}{2\lambda_{\min}(\Theta)^2}.
\end{equation*}
Given the bounds on $B$, this is bounded by $M = 1/(2r_-^2)$, satisfying condition (i).

\textbf{Condition (ii):} The function $G(\Theta)$ is smooth on $B$, satisfying the $C^3$ (thrice continuous differentiability) requirement. The positive definiteness of the Hessian $C$ was established in Lemma~\ref{lemma t-hessian,covariance}.

\textbf{Condition (iii):} The gradient condition $\mathbb{E}[\nabla_\Theta \ell(X, \Theta_0)]=0$ holds by assumption \eqref{key assumption on R for t-distribution} and the derivation in \eqref{assumption verification}. Furthermore, $C_{\triangle} < \infty$ is a consequence of the bound $0 < \mathbb{E}[\xi_R] < p$ derived in \eqref{bounds on xi_R}.

\textbf{Condition (iv):} The uniform tightness of the sequence $\widehat{\Theta}_n$ is established in Lemma~\ref{lemma: uniform tightness}.

\textbf{Condition (v):} We establish a uniform envelope over the compact closure of $B$. For any $\Theta \in B$, we have $r_- \lVert X\rVert_2^2 \leq X^T\Theta X \leq r_+ \lVert X\rVert_2^2$. 
The term $\log(\nu + X^T\Theta X)$ is bounded from below by $\log(\nu)$ and from above by $\log(\nu + r_+\lVert X\rVert_2^2)$.
Consequently,
\begin{align*}
    |\ell(X,\Theta)| &\leq \frac{1}{2}|\log\det(\Theta)| + \frac{\nu+p}{2}|\log(\nu+X^T\Theta X)|\\
    &\leq \frac{p}{2}\max(|\log r_-|, |\log r_+|) + \frac{\nu+p}{2}\max\left(|\log(\nu)|, \log(\nu+ r_+\lVert X\rVert_2^2)\right).
\end{align*}
This provides an integrable envelope. Specifically, for an elliptical vector $X = \Theta_0^{-1/2}uR$, the expectation $\mathbb{E}[\log(\nu + r_+\lVert X\rVert_2^2)]$ behaves asymptotically like $\mathbb{E}[\log(R^2)]$, which is finite by the assumption $\mathbb{E}[\log(1+R^2)] < \infty$.

Thus, all regularity conditions are satisfied, and the conclusion of Theorem~\ref{theorem convergence in distribution} follows.
\end{proof}

\begin{lemma}\label{lemma: uniform tightness}
    The sequence $(\widehat{\Theta}_n)_{n\in\mathbb{N}}$ is uniformly tight; i.e., for every $\varepsilon>0$ there exists a compact set $K\subset\operatorname{Sym}_+(p)$ such that 
    \begin{equation*}
        \mathbb{P}(\widehat{\Theta}_n\in K)\geq1-\varepsilon,
    \end{equation*} 
    for every $n\in\mathbb{N}$.
\end{lemma}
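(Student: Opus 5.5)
Since the penalized objective in \eqref{main objective} is convex in $\Theta$ (the $t$-loss is not convex in $\Theta$ globally, so care is needed), I would not argue through global convexity. Instead I would compare objective values along a sequence of shrinking events. Let $F_n(\Theta)=\frac1n\sum_i\ell(X^{(i)},\Theta)+n^{-1/2}\operatorname{Pen}(\Theta)$ with $\ell$ the $t$-loss \eqref{t-distribution loss}. Since $\widehat\Theta_n$ minimizes $F_n$, we have $F_n(\widehat\Theta_n)\le F_n(\Theta_0)$. The strong law of large numbers gives $F_n(\Theta_0)\to G(\Theta_0)$ almost surely; this uses the envelope $\mathbb E[\log(\nu+r_+\lVert X\rVert^2)]<\infty$, which follows from $\mathbb E[\log(1+R^2)]<\infty$. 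Therefore $F_n(\widehat\Theta_n)\le G(\Theta_0)+1$ with probability at least $1-\varepsilon/2$ for large $n$. The finitely many small $n$ are handled by enlarging $K$, since each $\widehat\Theta_n$ is a single random element of $\operatorname{Sym}_+(p)$ and is therefore tight on its own.

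The goal is to show that this sublevel set is eventually contained in a fixed compact set $K=\{r_-\le\lambda_{\min},\ \lambda_{\max}\le r_+\}$. I drop the nonnegative penalty and bound $F_n$ from below by the unpenalized empirical risk $G_n$. The case of eigenvalues escaping to zero is easy. If $\lambda_{\min}(\Theta)\to0$ while the other eigenvalues stay bounded, then $-\frac12\log\det\Theta\to+\infty$, while the second term of $\ell$ is at least $\frac{\nu+p}{2}\log\nu$. Hence $G_n(\Theta)\ge -\frac p2\log\lambda_{\max}(\Theta)-\frac12\log\lambda_{\min}(\Theta)+c$.

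The real issue is large eigenvalues. Write $\Theta=\sum_k\mu_k v_kv_k^T$. Using $\log(\nu+x^T\Theta x)\ge\log(\nu+\mu_k(v_k^Tx)^2)$ for each $k$, and averaging suitably over $k$, gives
\[
G_n(\Theta)\ge -\tfrac12\sum_k\log\mu_k+\tfrac{\nu+p}{2p}\sum_k\frac1n\sum_i\log\big(\nu+\mu_k(v_k^TX^{(i)})^2\big).
\]
For large $\mu_k$, each summand grows like $\frac{\nu+p}{2p}\log\mu_k$ plus $\frac1n\sum_i\log(v_k^TX^{(i)})^2$. Since $(\nu+p)/p>1$, the logarithmic growth beats $-\frac12\log\mu_k$. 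This yields coercivity in every direction, provided that
\[
\inf_{\lVert v\rVert=1}\frac1n\sum_i\log\big(\nu/\mu+(v^TX^{(i)})^2\big)
\]
stays bounded below uniformly. That is a uniform law of large numbers over the unit sphere. I would prove it via monotonicity in $\mu$, using a truncated version $\log(\delta+(v^TX)^2)$, equicontinuity in $v$, and compactness of the sphere. The limiting quantity is finite because $u$ has a density on the sphere, so $\mathbb E[\log((v^Tu)^2)]>-\infty$.

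I expect the main obstacle to be handling mixed regimes. When some eigenvalues blow up while others vanish, the weighting $(\nu+p)/p$ against the coefficient $\frac12$ must be balanced so that the total still diverges. For this I would use a sharper lower bound, $\log(\nu+x^T\Theta x)\ge\log(\nu+\mu_{\max}(v_{\max}^Tx)^2)$, combined with the determinant term. This shows that $G_n$ tends to $+\infty$ uniformly outside $K$ with high probability. Together with $F_n(\widehat\Theta_n)\le G(\Theta_0)+1$, this forces $\widehat\Theta_n\in K$.
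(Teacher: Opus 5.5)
\textbf{Overall.} Your architecture is sound, and in one respect it is more careful than the paper's. The comparison $F_n(\widehat\Theta_n)\le F_n(\Theta_0)\to G(\Theta_0)$ is what actually turns coercivity into tightness, and the paper leaves it implicit.

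Your averaged bound is also correct. It gives $G_n(\Theta)\ge\sum_{k=1}^p h_n(\mu_k,v_k)$ with
$h_n(\mu,v)=-\tfrac12\log\mu+\tfrac{\nu+p}{2p}\tfrac1n\sum_i\log\bigl(\nu+\mu(v^TX^{(i)})^2\bigr)$.
This decouples the eigenvalues, so the mixed regime you call the main obstacle is not one. The bound $h_n(\mu,v)\ge-\tfrac12\log\mu+\tfrac{\nu+p}{2p}\log\nu$ handles $\mu\to0$. Once $h_n(\cdot,v)\ge\phi$ uniformly in $v$, for a fixed $\phi$ that is bounded below and diverges at $\infty$, a bound on the sum pins every $\mu_k$ in a fixed interval. (Minor slip: in your $\lambda_{\min}$ bound the coefficient of $\log\lambda_{\max}$ should be $\tfrac{p-1}{2}$. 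With $\tfrac p2$ the inequality fails, e.g., for $\Theta=\epsilon I$ with $\epsilon$ small.)

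\textbf{The gap: the large-$\mu$ step.} You reduce this step to a statement that is false. For any fixed $n$ and $p\ge2$, pick $v\perp X^{(1)}$. Then $\tfrac1n\sum_i\log(\nu/\mu+(v^TX^{(i)})^2)$ equals $\tfrac1n\log(\nu/\mu)$ plus a term bounded above uniformly in $\mu\ge1$. So its infimum over the sphere tends to $-\infty$ as $\mu\to\infty$. No uniform law of large numbers can bound it below uniformly in $\mu$, and truncation plus monotonicity only reaches $\mu\le\nu/\delta$.

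Independently, the population limit $\mathbb{E}[\log((v^TX)^2)]$ contains $\mathbb{E}[\log R^2]$. The hypothesis $\mathbb{E}[\log(1+R^2)]<\infty$ controls only the upper tail of $R$ and does not make this finite, so the density of $u$ is not enough.

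\textbf{The missing idea.} This is the paper's device: count observations rather than average their logarithms. Use
$\log(\nu+\mu t^2)\ge\mathbb{I}\{|t|\ge\delta\}\log(\mu\delta^2)+\mathbb{I}\{|t|<\delta\}\log\nu$,
together with a Glivenko--Cantelli bound over the VC class of slabs $\{|v^Tx|\ge\delta\}$. Let $S_n(v)$ be the fraction of $i$ with $|v^TX^{(i)}|\ge\delta$. Then for $\mu\ge1$,
$h_n(\mu,v)\ge\bigl(\tfrac{\nu+p}{2p}S_n(v)-\tfrac12\bigr)\log\mu-C$.
The slope is positive as soon as $\inf_v S_n(v)>p/(\nu+p)$. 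This holds with probability tending to one for small $\delta$, since $\inf_v\mathbb{P}(|v^TX|\ge\delta)\to\mathbb{P}(R>0)>\mathbb{E}[R^2/(\nu+R^2)]=p/(\nu+p)$.

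\textbf{Comparison with the paper.} With that replacement your decoupled argument closes and treats all eigenvalues at once. The paper instead bounds $\lambda_{\max}$ first via the top eigenvector, and then $\lambda_{\min}$ on $\{\lambda_{\max}\le r_+\}$. Your proposed switch to the $\mu_{\max}$ bound is essentially the paper's route. It needs exactly the same slab control, so it does not by itself repair the step.
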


\begin{proof}
We consider a compact set $K$ of the form
\begin{equation*}
    K=\{\Theta\in\operatorname{Sym}_+(p): r_-\leq \lambda_{\min}(\Theta)\leq\lambda_{\max}(\Theta)\leq r_+\},
\end{equation*}
for some strictly positive constants $r_+, r_- > 0$. To ensure that $\widehat{\Theta}_n$ remains within such a set $K$ with high probability, it suffices to show the coercivity of the objective function. This means verifying that the objective loss tends to infinity as $\lambda_{\max}(\Theta)\to \infty$ or $\lambda_{\min}(\Theta)\to 0$.

Consider the spectral decomposition $\Theta = \sum_{j=1}^p \lambda_j v_j v_j^T$, where $0 < \lambda_p \leq \dots \leq \lambda_1$ are the eigenvalues and $\{v_1, \dots, v_p\}$ is an orthonormal basis. Let $\lambda_{\max}(\Theta) = \lambda_1 = \lVert\Theta\rVert$. For any fixed $\delta > 0$, we have
\begin{align*}
    \log(\nu + X^T\Theta X) &\geq \log(\nu + \lambda_1 (v_1^T X)^2)\\
    &\geq \log(\nu + \lambda_1 (v_1^T X)^2) \mathbb{I}{\{|v_1^TX|\geq \delta\}}\\
    &\geq \log(\lambda_1 (v_1^T X)^2) \mathbb{I}{\{|v_1^TX|\geq \delta\}}\\
    &= \left(\log(\lambda_1) + \log((v_1^TX)^2) \right)\mathbb{I}{\{|v_1^TX|\geq \delta\}}.
\end{align*}
Using the inequality $\log\det(\Theta) \leq p \log(\lambda_{\max}(\Theta))$, the empirical loss $\mathcal{L}_n(\Theta) = \frac{1}{n}\sum_{i=1}^n \ell(X_i, \Theta)$ can be bounded from below:
\begin{align*}
    \mathcal{L}_n(\Theta) &= -\frac{1}{2}\log\det(\Theta) + \frac{\nu+p}{2}\frac{1}{n}\sum_{i=1}^n \log(\nu + X_i^T\Theta X_i)\\
    &\geq -\frac{p}{2}\log(\lambda_1) + \frac{\nu+p}{2}\frac{1}{n}\sum_{i=1}^n \left(\log(\lambda_1)+\log((v_1^T X_i)^2) \right)\mathbb{I}{\{|v_1^T X_i|\geq \delta\}}\\
    &\geq \left(\frac{\nu+p}{2}S_n - \frac{p}{2}\right)\log(\lambda_{\max}(\Theta)) +  \frac{\nu+p}{2}\log(\delta^2) S_n,
\end{align*}
where 
\begin{equation*}
    S_n := \frac{1}{n}\sum_{i=1}^n \mathbb{I}{\{|v_1^T X_i|\geq \delta\}}.
\end{equation*}
Since the logarithmic term involving $\delta$ is bounded for fixed $\delta$, the dominant term is the coefficient of $\log(\lambda_{\max}(\Theta))$. We analyze $S_n$ using uniform convergence. The class of indicator functions $\{\mathbb{I}{\{|v^T x|\geq \delta\}} : v\in\mathbb{S}^{p-1}\}$ has finite VC-dimension (as intersections of half-spaces), implying it is a Glivenko-Cantelli class (see Example 19.17 in \cite{VanDerVaart1998}). 
Define $f_n(v):=\frac{1}{n}\sum_{i=1}^n \mathbb{I}{\{|v^T X_i|\geq \delta\}}$ and $f(v):=\mathbb{P}(|v^T X|\geq \delta)$. For any fixed $\delta\geq 0$, the uniform law of large numbers implies $\sup_{v\in\mathbb{S}^{p-1}}|f_n(v)-f(v)|\xrightarrow{p} 0$. 
Since the infimum is a continuous functional with respect to the uniform norm, we obtain
\begin{equation*}
    S_n \geq \inf_{v\in\mathbb{S}^{p-1}} f_n(v) \xrightarrow{p} \inf_{v\in\mathbb{S}^{p-1}} f(v) = \inf_{v\in\mathbb{S}^{p-1}} \mathbb{P}(|v^T X|\geq \delta).
\end{equation*}

It remains to bound the limit from below. Since $X$ has a non-singular covariance, $\mathbb{P}(|v^T X| \geq \delta) \to 1$ as $\delta \to 0$ for every fixed $v$. Moreover, the mapping $\delta \mapsto \mathbb{P}(|v^T X| \geq \delta)$ is monotone. By Dini's Theorem, this convergence is uniform on the compact sphere $\mathbb{S}^{p-1}$. 
Thus, $\inf_{v \in \mathbb{S}^{p-1}} \mathbb{P}(|v^T X| \geq \delta) \to 1$ as $\delta \to 0$.
Consequently, we can choose $\delta > 0$ sufficiently small such that
\begin{equation*}
    \inf_{v\in\mathbb{S}^{p-1}} \mathbb{P}(|v^T X|\geq \delta) > \frac{p+1}{\nu+p}.
\end{equation*}
Then, with probability approaching 1, $S_n \geq \frac{p+1}{\nu+p}$, and consequently:

\begin{equation*}
    \mathcal{L}_n(\Theta) \geq \frac{1}{2}\log(\lambda_{\max}(\Theta)) - O_p(1).
\end{equation*}
The loss tends to infinity as $\lVert\Theta\rVert \to \infty$. As a result, the empirical loss is bounded from below uniformly for large $\lVert\Theta\rVert$ and $n$. Precisely, there exists $N\in\mathbb{N}$ such that:
\begin{equation*}
    \inf_{\lVert\Theta\rVert\geq r_+} \inf_{n\geq N}\mathcal{L}_n(\Theta) \to \infty,
\end{equation*}
as $r_+\to\infty$. The same uniform lower bound holds for the penalized loss, as the penalty is non-negative. For the finitely many terms $n < N$, tightness follows from the fact that any finite collection of random variables is tight.
Consequently, for any $\varepsilon>0$, we can choose $r_+>0$ large enough such that 
\begin{equation*}
    \mathbb{P}\left(\lVert\widehat{\Theta}_n \rVert \leq r_+\right)\geq 1-\varepsilon \quad \forall n \in \mathbb{N}.
\end{equation*}

To establish uniform tightness of $\widehat{\Theta}_n$, it remains to bound the smallest eigenvalue $\lambda_{\min}(\widehat{\Theta}_n)$ away from zero. We condition on the event $\{\lVert\widehat{\Theta}_n\rVert \leq r_+\}$.
Using the inequality $\log\det(\Theta) \leq \log(\lambda_{\min}(\Theta)) + (p-1)\log(\lambda_{\max}(\Theta))$ and observing that the term $\log(\nu + X^T\Theta X)$ is bounded from below by $\log(\nu)$, we have for any $\lVert\Theta\rVert\leq r_+$:
\begin{align*}
    \mathcal{L}_n(\Theta) &\geq -\frac{1}{2} \log(\lambda_{\min}(\Theta)) - \frac{p-1}{2}\log(r_+) + \frac{\nu+p}{2}\log(\nu).%\\
    %&= -\frac{1}{2} \log(\lambda_{\min}(\Theta)) + O(1).
\end{align*}
As $\lambda_{\min}(\Theta)\to 0$, the loss tends to infinity. Thus, minimizing the loss requires the smallest eigenvalue to be bounded away from zero. 
We conclude that for every $\varepsilon>0$, there exist constants $r_+,r_->0$ such that
\begin{equation*}
    \mathbb{P}\left(r_-\leq \lambda_{\min}(\widehat{\Theta}_n )\leq\lambda_{\max}(\widehat{\Theta}_n )\leq r_+\right)\geq 1-\varepsilon \quad \forall n \in \mathbb{N},
\end{equation*}
which proves that the sequence $\widehat{\Theta}_n$ is uniformly tight.
\end{proof}

\end{document}